\theoremstyle{plain}
\newtheorem{theorem}{Theorem}[section]
\newtheorem{proposition}[theorem]{Proposition}
\newtheorem{corollary}[theorem]{Corollary}
\newtheorem{lemma}[theorem]{Lemma}
\theoremstyle{definition}
\newtheorem{definition}[theorem]{Definition}
\newtheorem{remark}[theorem]{Remark}
\newtheorem*{tablenote*}{Tables}
\numberwithin{equation}{section}
\newtheorem{example}[theorem]{Example}
\newcommand{\N}{\mathbb{N}}
\numberwithin{equation}{section}
\newcommand{\xmark}{\ding{55}}
\tikzset{cross/.style={cross out, draw=black, minimum size=2*(#1-\pgflinewidth), inner sep=0pt, outer sep=0pt}, 
	cross/.default={2pt}}
\title[Normalizers of Sylows in finite reflection groups]{Normalizers of Sylow subgroups in finite reflection groups}
\author[Kane D. Townsend]{Kane Douglas Townsend}
\address{Mathematical Sciences Institute, Australian National University, Canberra ACT 2601, Australia}
\email{kane.townsend@anu.edu.au}
\subjclass[2020]{primary 20F55 ; secondary 20D20}
\keywords{finite reflection groups, Sylow subgroups, parabolic subgroups}
\begin{document}
	
	\begin{abstract}
		Let $W$ be a finite reflection group, either real or complex, and $S_\ell$ a Sylow $\ell$-subgroup of $W$. We prove the existence of a semidirect product decomposition of $N_W(S_\ell)$ in terms of the unique parabolic subgroup of $W$ minimally containing $S_\ell$ and known decompositions of normalizers of parabolic subgroups. In the real setting, the description follows from the existence of Sylow $\ell$-subgroups stable under the Coxeter diagram automorphisms of finite reflection groups with no proper parabolic subgroup containing a Sylow $\ell$-subgroup. 
	\end{abstract}
	
	\maketitle

	\section{Introduction}\label{intro}
	
	Let $V$ be a finite dimensional vector space, either real or complex. Let $W$ be a finite reflection group on $V$. Let $\ell$ be a prime number dividing the order of $W$ and $\text{Syl}_\ell(W)$ be the unique conjugacy class of Sylow $\ell$-subgroups of $W$. The parabolic and reflection subgroups that \emph{minimally contain} (see Definition \ref{def:minimallycontains}) a Sylow $\ell$-subgroup of $W$ were classified, up to conjugacy, in the real setting in \cite{ME} and the complex setting in \cite{METOO}. The set of parabolic subgroups of $W$, denoted by $\mathcal{P}(W)$, is closed under conjugation and intersection. Hence, for each Sylow $\ell$-subgroup there is a unique parabolic subgroup that minimally contains it (special case of Lemma \ref{lem:nosharing}). Therefore, the parabolic subgroups that minimally contain a Sylow $\ell$-subgroup of $W$ form a unique conjugacy class (see Corollary \ref{cor:parabolicuniqueclass}). In comparison, the set of reflection subgroups of $W$, denoted by $\mathcal{R}(W)$, is closed under conjugation but not intersection. As a result, the reflection subgroups that minimally contain a Sylow $\ell$-subgroup do not have such nice properties.
	
	\begin{definition}\label{def:minimallycontains}
		Let $G$ be a finite group and $\mathcal{S}(G)$ be a subset of the subgroups of $G$. If $H$ is a subgroup of $G$, then we say that $K$ \emph{minimally contains $H$ in $\mathcal{S}(G)$} if $K$ is a minimal element of $\mathcal{S}(G)$, with respect to containment, that contains $H$ as a subgroup. Furthermore, given a prime number $\ell$ we let, \[\mathcal{S}_\ell(G):=\{S\in\mathcal{S}(G) \mid S \ \text{minimally contains an element of} \ \text{Syl}_\ell(G)\}.\] 
	\end{definition}
	
	\begin{definition}\label{def:sylowclassofparabolics}
		Call $\mathcal{P}_\ell(W)$ the \emph{$\ell$-Sylow class of parabolic subgroups} of $W$. If $\mathcal{P}_\ell(W)=\{W\}$, we say that $W$ is \emph{$\ell$-{cuspidal}}.
	\end{definition}
	
	\begin{definition}\label{def:sylowclassofreflection}
		Call $\mathcal{R}_\ell(W)$ the \emph{$\ell$-Sylow classes of reflection subgroups}. If $\mathcal{R}_\ell(W)=\{W\}$, we say that $W$ is \emph{$\ell$-{supercuspidal}}.
	\end{definition}
	
	We note that the wording of the above definitions assumes that $\mathcal{P}_\ell(W)$ forms a unique conjugacy class in $W$ (see Corollary \ref{cor:parabolicuniqueclass}), while $\mathcal{R}_\ell(W)$ may not.
	
	The $\ell$-Sylow class of parabolic subgroups appears in the study of modular representation theory for finite groups of Lie type. In \cite[Theorem 4.5]{AHJR}, the $\ell$-Sylow class of parabolic subgroups of a Weyl group assists in determining the modular generalized Springer correspondence for a connected reductive group. In \cite[Theorem 4.2]{GHM}, the analogous idea of Levi subgroups minimally containing a Sylow $\ell$-subgroup of a finite reductive group are involved in determining the semisimple vertex of the $\ell$-modular Steinberg character.
	
	In \cite[Lemma 2]{HOWLETT}, it was shown that given a reflection subgroup $R$ of a finite real reflection group $W$, that $N_W(R)=R\rtimes U$ for some $U\leq W$, which we call the \emph{H-complement}. We note that the generalization from a parabolic subgroup to a reflection subgroup is mentioned in \cite[Lemma 3.3]{GHM3}, and that $U$ induces Coxeter diagram automorphisms on $R$. In \cite{MURA}, it is shown that given a parabolic subgroup $P$ of a finite complex reflection group $W$, that $N_W(P)=P\rtimes U$ for some $U\leq W$, which we call the \emph{MT-complement}. We will see that given a Sylow $\ell$-subgroup $S_\ell$ of $W$, there exists a unique parabolic subgroup minimally containing it, and so $N_W(S_\ell)\leq N_W(P_\ell)$ (special cases of Lemma \ref{lem:nosharing} and Corollary \ref{cor:normalizercontainment}). It is then natural to ask if the normalizer of a Sylow subgroup in a finite reflection group will have a similar structure to the normalizer of a parabolic subgroup. This leads to the following main results.
	
	\begin{theorem}\label{thm:normalizersylowreal}
		Let $W$ be a finite real reflection group with simple system $\Delta$. Let $W(\Lambda)\in \mathcal{P}_\ell(W)$ with $\Lambda\subseteq \Delta$. Then there exists a Sylow $\ell$-subgroup $S_\ell\leq W(\Lambda)$ such that $N_W(S_\ell)=N_{W(\Lambda)}(S_\ell)\rtimes U_\Lambda$, where $U_\Lambda$ is the H-complement of $W(\Lambda)$.
	\end{theorem}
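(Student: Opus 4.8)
The plan is to reduce the theorem to a statement about $W(\Lambda)$ alone and then to settle that statement via the classification of $\ell$-cuspidal real reflection groups in \cite{ME}.

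\emph{Reduction.} First note that $W(\Lambda)$ is $\ell$-cuspidal: the parabolic subgroups of $W$ contained in $W(\Lambda)$ are precisely the parabolic subgroups of $W(\Lambda)$, so minimality of $W(\Lambda)$ among parabolic subgroups of $W$ containing a Sylow $\ell$-subgroup forces $\mathcal{P}_\ell(W(\Lambda))=\{W(\Lambda)\}$. The crux of the proof is then the following claim, which I would isolate as a lemma: \emph{if $W'$ is an $\ell$-cuspidal finite real reflection group, then $W'$ has a Sylow $\ell$-subgroup that is invariant under every Coxeter diagram automorphism of $W'$.} Applying this with $W'=W(\Lambda)$ gives a Sylow $\ell$-subgroup $S_\ell\le W(\Lambda)$ stable under all diagram automorphisms of $W(\Lambda)$.

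\emph{Deducing the theorem.} Since $W(\Lambda)\in\mathcal{P}_\ell(W)$ it contains a Sylow $\ell$-subgroup of $W$, so $|W(\Lambda)|_\ell=|W|_\ell$ and $S_\ell$ is in fact a Sylow $\ell$-subgroup of $W$. Let $P$ be the unique parabolic subgroup of $W$ minimally containing $S_\ell$; since $\mathcal{P}(W)$ is closed under intersection, $P$ is the intersection of all parabolic subgroups of $W$ containing $S_\ell$, and in particular $P\subseteq W(\Lambda)$. As $P$ and $W(\Lambda)$ both lie in the single conjugacy class $\mathcal{P}_\ell(W)$ (Corollary~\ref{cor:parabolicuniqueclass}) they have the same order, so $P=W(\Lambda)$. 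Hence $N_W(S_\ell)\le N_W(W(\Lambda))$ by (the relevant special cases of) Lemma~\ref{lem:nosharing} and Corollary~\ref{cor:normalizercontainment}, and $N_W(W(\Lambda))=W(\Lambda)\rtimes U_\Lambda$ by \cite[Lemma~2]{HOWLETT}. Because $U_\Lambda$ acts on $W(\Lambda)$ by Coxeter diagram automorphisms and $S_\ell$ is invariant under all of them, $U_\Lambda\le N_W(S_\ell)$. Now intersect the decomposition $N_W(W(\Lambda))=W(\Lambda)\rtimes U_\Lambda$ with $N_W(S_\ell)$: the subgroup $N_W(S_\ell)\cap W(\Lambda)=N_{W(\Lambda)}(S_\ell)$ is normal in $N_W(S_\ell)$ (being the intersection of $N_W(S_\ell)$ with the normal subgroup $W(\Lambda)$ of $N_W(W(\Lambda))$), it meets $U_\Lambda$ trivially since $W(\Lambda)\cap U_\Lambda=1$, and writing any $g\in N_W(S_\ell)$ as $g=wu$ with $w\in W(\Lambda)$ and $u\in U_\Lambda\le N_W(S_\ell)$ gives $w=gu^{-1}\in N_{W(\Lambda)}(S_\ell)$, so $N_{W(\Lambda)}(S_\ell)$ and $U_\Lambda$ generate $N_W(S_\ell)$. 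Therefore $N_W(S_\ell)=N_{W(\Lambda)}(S_\ell)\rtimes U_\Lambda$.

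\emph{Main obstacle.} All of the above is formal once the lemma is known, so the real content lies there. I would first reduce it to the irreducible case: a Coxeter diagram automorphism of $W'$ permutes the irreducible factors within each isomorphism type and acts by diagram automorphisms within each factor, so it suffices to choose a diagram-stable Sylow $\ell$-subgroup in one representative of each isomorphism type and transport it along fixed isomorphisms to the remaining factors. For irreducible $\ell$-cuspidal $W'$ the diagram automorphism group is trivial apart from a short list of types ($A_n$, $D_n$, $E_6$, $F_4$, $I_2(m)$, with $D_4$ the only case where it exceeds order two), and for each of these that is $\ell$-cuspidal one must produce an explicit diagram-automorphism-invariant Sylow $\ell$-subgroup; for instance when $W'=W(A_{\ell-1})$ is the symmetric group on $\ell$ letters and $\ell$ is odd, the subgroup generated by the $\ell$-cycle $(1\,2\,\cdots\,\ell)$ is sent to its inverse by the diagram flip and is thus invariant. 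Carrying out this verification case by case against the classification in \cite{ME} is the technical heart of the argument.
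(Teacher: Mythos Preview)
Your overall strategy matches the paper's: use Corollary~\ref{cor:normalizercontainment} and Howlett's splitting to place $N_W(S_\ell)$ inside $W(\Lambda)\rtimes U_\Lambda$, observe that $U_\Lambda$ acts on $W(\Lambda)$ through Coxeter diagram automorphisms, and then look for a Sylow $\ell$-subgroup of the $\ell$-cuspidal group $W(\Lambda)$ invariant under those automorphisms, settling this last point by a case-by-case analysis along the classification in \cite{ME}. Your deduction of the semidirect product decomposition from the existence of such an $S_\ell$ is clean and correct.

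The gap is that your key lemma, as stated, is false. For $W'$ of type $F_4$ with $\ell=3$, \emph{no} Sylow $3$-subgroup is stable under the order-two diagram automorphism; this is exactly the exception recorded in the paper's Theorem~\ref{thm:sylowstablediagramautroeal}. Since $F_4$ is $3$-cuspidal it is genuinely on your list, so the case-by-case verification you outline would break down at this point.

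What rescues the theorem is that you only need invariance under those diagram automorphisms of $W(\Lambda)$ that are actually induced by $U_\Lambda$, not under all of them. The paper handles this by first reducing to irreducible $W$ (the desired decomposition factors over irreducible components of $W$) and then noting that, by inspection of the Coxeter diagrams in Figure~\ref{fig:coxdiagrams}, $F_4$ never occurs as an irreducible component of a \emph{proper} standard parabolic subgroup of an irreducible finite real reflection group. Hence if $W(\Lambda)\lneq W$ then no component of $W(\Lambda)$ is of type $F_4$ and Theorem~\ref{thm:sylowstablediagramautroeal} applies without exception; if $W(\Lambda)=W$ then $U_\Lambda=1$ and the statement is trivial. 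You should weaken your lemma to allow the $(F_4,\ell=3)$ exception and insert this extra observation to complete the argument.
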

	
	\begin{theorem}\label{thm:normalizersylowparacomp}
		Let $W$ be a finite complex reflection group and $P_\ell\in \mathcal{P}_\ell(W)$. Then there exists a Sylow $\ell$-subgroup $S_\ell\leq P_\ell$ such that $N_W(S_\ell)=N_{P_\ell}(S_\ell)\rtimes U$, where $U$ is the MT-complement of $P_\ell$.
	\end{theorem}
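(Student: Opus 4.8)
The plan is to reduce the complex case to a known decomposition of the normalizer of the parabolic subgroup $P_\ell$ together with a Sylow theory argument inside $P_\ell$. First I would recall that since $\mathcal{P}(W)$ is closed under conjugation and intersection, a Sylow $\ell$-subgroup $S_\ell$ of $W$ is contained in a \emph{unique} minimal parabolic subgroup $P_\ell\in\mathcal{P}_\ell(W)$ (the special case of Lemma \ref{lem:nosharing} mentioned in the introduction). In particular $S_\ell$ is also a Sylow $\ell$-subgroup of $P_\ell$, because $P_\ell$ is $\ell$-cuspidal in its own right: if $[W:P_\ell]$ were divisible by $\ell$ then some conjugate of $S_\ell$ would sit in a smaller parabolic, contradicting minimality. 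From Corollary \ref{cor:normalizercontainment} we then get $N_W(S_\ell)\le N_W(P_\ell)$, since any $w$ normalizing $S_\ell$ must send $P_\ell$ — the unique minimal parabolic over $S_\ell$ — to the unique minimal parabolic over $wS_\ell w^{-1}=S_\ell$, hence fixes $P_\ell$.

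Next I would invoke the result of \cite{MURA}: $N_W(P_\ell)=P_\ell\rtimes U$ for the MT-complement $U$, and crucially $U$ acts on $P_\ell$ through reflection-preserving (``diagram'') automorphisms. The strategy is to choose $S_\ell$ to be a $U$-stable Sylow $\ell$-subgroup of $P_\ell$; granting such a choice, every $u\in U$ normalizes $S_\ell$, so $U\le N_W(S_\ell)$, and then for an arbitrary $w\in N_W(S_\ell)\le N_W(P_\ell)=P_\ell U$ we write $w=pu$ with $p\in P_\ell$, $u\in U$; since $u$ normalizes $S_\ell$, so does $p=wu^{-1}$, giving $p\in N_{P_\ell}(S_\ell)$. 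This yields $N_W(S_\ell)=N_{P_\ell}(S_\ell)U$, and the sum is semidirect because $N_{P_\ell}(S_\ell)\cap U\subseteq P_\ell\cap U=1$ while $U$ still normalizes $N_{P_\ell}(S_\ell)$ (it stabilizes $S_\ell$ and $P_\ell$). So the whole theorem comes down to producing a $U$-stable Sylow $\ell$-subgroup of $P_\ell$.

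For the existence of a $U$-stable Sylow $\ell$-subgroup, I would argue that $N_W(P_\ell)=P_\ell\rtimes U$ is itself a finite group in which $P_\ell$ is a normal subgroup and $U$ a complement. A Sylow $\ell$-subgroup $\widehat{S}$ of $N_W(P_\ell)$ meets $P_\ell$ in $\widehat{S}\cap P_\ell$, which (since $\ell\nmid [N_W(P_\ell):P_\ell]=|U|$, as $P_\ell$ contains a Sylow $\ell$-subgroup of $W$ and hence of $N_W(P_\ell)$) is in fact a Sylow $\ell$-subgroup of $P_\ell$; meanwhile $\widehat{S}$ normalizes $\widehat{S}\cap P_\ell$. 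Replacing $\widehat{S}$ by a conjugate if necessary, one can further arrange $\widehat{S}\le P_\ell\rtimes U$ to have a nontrivial image in $U$ covering a Sylow $\ell$-subgroup of $U$, but since $|U|$ is prime to $\ell$ this image is trivial, so actually $\widehat{S}\le P_\ell$, i.e. $\widehat{S}$ is already a Sylow $\ell$-subgroup of $P_\ell$ and it is normalized by any element of $N_W(P_\ell)$ that normalizes it — this does not immediately give $U$-stability. The cleaner route, which I would adopt, is a Frattini-type argument: take any Sylow $\ell$-subgroup $S_\ell$ of $P_\ell$; then $N_W(P_\ell)=P_\ell\cdot N_{N_W(P_\ell)}(S_\ell)$ by the Frattini argument, so $U$ can be chosen (i.e. the complement can be conjugated within $N_W(P_\ell)$) to lie inside $N_{N_W(P_\ell)}(S_\ell)$, which exactly says $U$ normalizes $S_\ell$. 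Since the complement in a semidirect product is unique up to conjugacy by $P_\ell$ in many relevant situations — or, failing that, one simply \emph{defines} $U$ as this conjugate — the decomposition $N_W(P_\ell)=P_\ell\rtimes U$ with $U$ normalizing $S_\ell$ holds, and the previous paragraph concludes.

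The main obstacle is the interface between the Frattini argument and the specific complement $U$ furnished by \cite{MURA}: one must check that the MT-complement is preserved (up to the harmless $P_\ell$-conjugacy that also moves $S_\ell$) under the replacement, so that the statement can be phrased, as in the theorem, with ``the'' MT-complement rather than merely ``some'' complement. Concretely I expect to verify that all complements to $P_\ell$ in $N_W(P_\ell)$ are $P_\ell$-conjugate — which follows because $\gcd(|P_\ell|,|U|)=1$ and hence the extension $1\to P_\ell\to N_W(P_\ell)\to U\to 1$ splits with complements unique up to conjugacy by the Schur–Zassenhaus theorem — after which the Frattini argument lets us pick the complement inside $N_{N_W(P_\ell)}(S_\ell)$ and we are done.
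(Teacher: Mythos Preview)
Your reduction is sound up to a point: the containment $N_W(S_\ell)\le N_W(P_\ell)=P_\ell\rtimes U$ and the observation that the theorem follows once one exhibits a $U$-stable Sylow $\ell$-subgroup of $P_\ell$ are exactly how the paper sets things up. The gap is in your proposed mechanism for producing that $U$-stable Sylow. You correctly note that $\ell\nmid|U|$, but you then upgrade this to $\gcd(|P_\ell|,|U|)=1$ in order to invoke Schur--Zassenhaus for conjugacy of complements, and that stronger claim is simply false. For instance, take $W$ of type $A_9$ and $\ell=5$: then $P_\ell$ has type $A_4\times A_4$ with $|P_\ell|=(5!)^2$, while the complement $U$ has order~$2$ (it swaps the two $A_4$ factors), so $\gcd(|P_\ell|,|U|)=2$. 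Without coprimality, Schur--Zassenhaus gives neither conjugacy of complements nor, more to the point, a splitting of $1\to N_{P_\ell}(S_\ell)\to N_{N_W(P_\ell)}(S_\ell)\to U\to 1$, so the Frattini argument does not by itself place a conjugate of $U$ inside $N_{N_W(P_\ell)}(S_\ell)$.

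In fact the abstract statement your argument would prove---that in any semidirect product $N\rtimes H$ with $\ell\nmid|H|$ there is an $H$-stable Sylow $\ell$-subgroup of $N$---is false even when $N$ is a reflection group and $H$ acts by diagram automorphisms: the paper itself records (Theorem~\ref{thm:sylowstablediagramautroeal} and Table~\ref{table:sylowcomplexnormal}) that $W(F_4)$ has no Sylow $3$-subgroup stable under its order-$2$ diagram automorphism, and more generally lists several $\ell$-cuspidal complex reflection groups with the same behaviour. So some genuinely reflection-theoretic input is required. The paper supplies it by a case-by-case verification: for the exceptional groups $G_4,\dots,G_{37}$ one checks with MAGMA that the concrete MT-complement from \cite{MURA} normalises some Sylow $\ell$-subgroup of $P_\ell$, and for the infinite family $G(m,p,n)$ one uses the explicit description of the MT-complement in \cite[Theorem~3.12]{MURA} together with the construction behind Proposition~\ref{prop:sylowinsymmetricisnormalized}. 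Your Frattini/Schur--Zassenhaus route cannot replace this.
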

	
	We recall that by the Schur-Zassenhaus theorem, for a general finite group $G$, the normaliser $N_G(S)$ is the semidirect product $S\rtimes U$, for some complement $U$, and that each complement of $S$ in $N_G(S)$ is conjugate to $U$ in $N_G(S)$. Hence, the above theorems are specializations of the Schur-Zassenhaus theorem to finite reflection groups. 
	
	To prove Theorem \ref{thm:normalizersylowreal}, we prove a preliminary result regarding the existence of Sylow $\ell$-subgroups stable under Coxeter diagram automorphisms.
	
	\begin{theorem}\label{thm:sylowstablediagramautroeal}
		Let $W$ be an $\ell$-cuspidal finite real reflection group. There exists a Sylow $\ell$-subgroup of $W$ stable under the Coxeter diagram automorphisms of $W$ except when $W$ has a component of type $F_4$ and $\ell=3$. In particular, if $W$ is $\ell$-supercuspidal, then there exists a Sylow $\ell$-subgroup stable under the Coxeter diagram automorphisms of $W$.
	\end{theorem}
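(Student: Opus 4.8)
The plan is to reduce to the irreducible case and then run through the classification of $\ell$-cuspidal irreducible finite real reflection groups obtained in \cite{ME}. For the reduction, write $W=W_1\times\cdots\times W_r$ with each $W_i$ irreducible. Parabolic subgroups of $W$ are products of parabolic subgroups of the $W_i$, and a Sylow $\ell$-subgroup of $W$ is a product of Sylow $\ell$-subgroups of the $W_i$; hence $W$ is $\ell$-cuspidal if and only if every $W_i$ is. The group of Coxeter diagram automorphisms of $W$ is $\prod_t\bigl(\Gamma_t\wr\operatorname{Sym}(I_t)\bigr)$, the product being over the isomorphism types $t$ of irreducible components, where $I_t$ indexes the components of type $t$ and $\Gamma_t$ is the diagram automorphism group of such a component. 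So the first step is to observe that, if every irreducible $\ell$-cuspidal $W_i$ admits a Sylow $\ell$-subgroup stable under its own diagram automorphisms, then choosing the matching subgroup in each isomorphic component (via a fixed identification) produces a Sylow $\ell$-subgroup of $W$ stable under the whole wreath product; and an $F_4$-component with $\ell=3$ forces the exception for $W$. Thus we may assume $W$ irreducible.

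Next I would invoke the classification in \cite{ME}: the irreducible $\ell$-cuspidal groups with a nontrivial diagram automorphism group are $W(A_{\ell^k-1})$ for $k\ge1$, $W(D_n)$ with $\ell=2$ for $n\ge4$, $W(E_6)$ with $\ell=3$, $W(F_4)$ with $\ell\in\{2,3\}$, and the dihedral groups $I_2(m)$ (including $G_2$) with the appropriate prime; for all other irreducible $\ell$-cuspidal $W$ there is nothing to prove. Two families are immediate. When the diagram automorphism group is cyclic of order two and $\ell=2$: the number of Sylow $2$-subgroups of $W$ divides the odd integer $[W:S_2]$, hence is odd, so any order-two automorphism of $W$ (which permutes the Sylow $2$-subgroups in orbits of size $1$ or $2$) must fix one of them; this disposes of $W(A_{2^k-1})$, of $W(D_n)$ for $n\ge5$, of $W(F_4)$ with $\ell=2$, and of the dihedral cases with $\ell=2$. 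When $W=I_2(m)$ and $\ell$ is odd: a Sylow $\ell$-subgroup is the unique Sylow $\ell$-subgroup of the cyclic group of rotations, which is characteristic in $W$, and the diagram automorphism inverts that cyclic group and hence fixes its Sylow $\ell$-subgroup (this also covers $G_2$ with $\ell=3$).

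For $W(A_{\ell^k-1})$, which I identify with the symmetric group on $\{1,\dots,\ell^k\}$, the case left open is $\ell$ odd; here the nontrivial diagram automorphism is conjugation by the longest element $w_0$, the order-reversing permutation $i\mapsto\ell^k+1-i$, so it suffices to produce a $w_0$-invariant Sylow $\ell$-subgroup $S$. I would build $S$ by induction on $k$: partition $\{1,\dots,\ell^k\}$ into $\ell$ consecutive blocks of length $\ell^{k-1}$, place in each block a translated copy of a $w_0$-invariant Sylow $\ell$-subgroup of $\operatorname{Sym}_{\ell^{k-1}}$ (inductive hypothesis), and adjoin the $\ell$-cycle $\tau$ translating the blocks cyclically. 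Then $w_0\tau w_0^{-1}=\tau^{-1}$, and $w_0$ carries the copy on the $j$-th block onto the copy on the $(\ell-1-j)$-th block via the order-reversal of the depth-$(k-1)$ structure, which by induction is again the chosen Sylow $\ell$-subgroup; hence $w_0$ normalizes $S$ (the base case $\operatorname{Sym}_\ell$, with $S$ cyclic generated by an $\ell$-cycle inverted by $w_0$, being clear). For $W(D_4)$ with $\ell=2$ the order-two diagram automorphisms fix a Sylow $2$-subgroup by the parity argument above; since $W(D_4)$ has exactly three Sylow $2$-subgroups (each of index three), an order-three automorphism fixing one fixes all three, so if the triality automorphisms fix a Sylow $2$-subgroup then the diagram automorphism group $\mathfrak S_3$ acts on the three of them through a quotient of order at most two and has a common fixed point. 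That the triality automorphisms do fix a Sylow $2$-subgroup I would check directly, e.g.\ by tracking a Sylow $2$-subgroup of $W(D_4)$ inside one of $W(F_4)=W(D_4)\rtimes\mathfrak S_3$; I expect this to be the one fiddly point among the classical cases.

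It then remains to treat $W(E_6)$ with $\ell=3$ and $W(F_4)$ with $\ell=3$, for which the number of Sylow $3$-subgroups is even, so the parity trick is unavailable. For $W(E_6)$ the diagram automorphism is again conjugation by the longest element $w_0$ (now $w_0\ne-1$), an involution with centralizer isomorphic to $W(F_4)$, and I would exhibit a $w_0$-stable Sylow $3$-subgroup of order $81$ directly. For $W(F_4)$, by contrast, I would show that the (outer, order-two) diagram automorphism permutes the Sylow $3$-subgroups of $W(F_4)$ with \emph{no} fixed point: a Sylow $3$-subgroup is elementary abelian of order $9$, lying inside an $A_2\times A_2$ reflection subsystem with one long and one short factor, and the diagram automorphism interchanges long and short roots while $F_4$ carries no diagram-automorphism-invariant $A_2\times A_2$ subsystem, so every such subsystem, and hence every such Sylow $3$-subgroup, is moved. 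This is the exceptional case. I expect the hard part of the whole proof to be precisely this pair: the non-existence statement for $F_4$ with $\ell=3$, which rests on the combinatorics of $A_2\times A_2$ subsystems of $F_4$ rather than on any count, and the matching existence statement for $E_6$ with $\ell=3$. The closing assertion is then immediate, since $W(F_4)$ is not $3$-supercuspidal, its reflection subgroup of type $A_2\times A_2$ already containing a Sylow $3$-subgroup, so no $\ell$-supercuspidal group lands in the exceptional case.
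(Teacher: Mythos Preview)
Your plan coincides with the paper's proof almost step for step: reduce to the irreducible case (the paper's Remark~\ref{rem:reductiontoirred}), list the $\ell$-cuspidal irreducible types with nontrivial diagram automorphism, dispose of all cases where the automorphism has $\ell$-power order by the parity argument (the paper's Lemma~\ref{lem:groupautosylow}), build the Sylow $\ell$-subgroup of $A_{\ell^k-1}$ by the same wreath-product induction (the paper's Proposition~\ref{prop:sylowinsymmetricisnormalized}), and handle $I_2(m)$ via the unique rotation Sylow. For $D_4$ and $E_6$ the paper simply writes down explicit stable Sylow subgroups rather than arguing structurally, but your outline and the paper's execution are doing the same thing.

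The one point where your argument has a real gap is $F_4$ with $\ell=3$. You assert that the diagram automorphism $\sigma$ fixes no $A_2\times\tilde A_2$ reflection subgroup because it interchanges long and short reflections; but that alone does not preclude $\sigma$ swapping the long $A_2$ factor with the short $\tilde A_2$ factor and thereby stabilising the product. You would need a separate argument to exclude this. Moreover, to pass from ``no $A_2\times\tilde A_2$ is $\sigma$-stable'' to ``no Sylow $3$-subgroup is $\sigma$-stable'' you implicitly use that each Sylow $3$-subgroup lies in a \emph{unique} $A_2\times\tilde A_2$; unlike the parabolic situation (Lemma~\ref{lem:nosharing}), this is not automatic for reflection subgroups and needs justification. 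The paper sidesteps both issues and simply records that a MAGMA check shows none of the sixteen Sylow $3$-subgroups of $W(F_4)$ is $\sigma$-stable. Your conclusion is correct, but the root-subsystem heuristic as stated does not prove it.
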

	
	The structure of the paper is as follows. In Section \ref{Preliminaries}, we introduce definitions and prove some preliminary results. In Section \ref{sylowstableunderauto} we prove Theorem \ref{thm:sylowstablediagramautroeal} using case-by-case arguments. In Section \ref{normalizersofsylowsubgroupsinreal} we prove Theorem \ref{thm:normalizersylowreal}. We also determine when a more refined decomposition exists in terms of reflection subgroups minimally containing Sylow $\ell$-subgroups (see Theorem \ref{thm:normalizersylowref}). In Section \ref{complexcase} we prove Theorem \ref{thm:normalizersylowparacomp} using case-by-case arguments and MAGMA \cite{MAGMA} computations. In Section \ref{complexdiagramsproof} we generalize the notion of Coxeter diagram automorphism to the complex setting and prove an analogue to Theorem \ref{thm:sylowstablediagramautroeal} (see Theorem \ref{thm:sylowstablediagramautocomp}). 
	
	\subsection*{MAGMA Calculations} Code is provided at \url{https://doi.org/10.17605/OSF.IO/8UAM7} for the MAGMA calculations mentioned throughout this paper.
	
	\section{Preliminaries}\label{Preliminaries}
	
	A \emph{reflection} on $V$ is a finite order linear operator that fixes a hyperplane of $V$ pointwise. A \emph{finite reflection group} is a finite group generated by reflections on $V$. A \emph{reflection subgroup} of $W$ is a subgroup generated by reflections. A \emph{parabolic subgroup} of $W$ is the pointwise stabilizer $W_U:=\{w\in W \mid wu=u \ \text{for all} \ u\in U\},$ for some subspace $U$ of $V$. Since $W_{U_1}\cap W_{U_2}=W_{U_1+U_2}$ for subspaces $U_1,U_2\subseteq V$, the intersection of parabolic subgroups of $W$ is a parabolic subgroup of $W$. By \cite[Theorem 1.5]{STEINBERG}, a parabolic subgroup is a reflection subgroup. We now correct an error in the proof of \cite[Lemma 2.3]{ME}, which is used to show that any element of $\mathcal{R}_\ell(W)$ has parabolic closure being an element of $\mathcal{P}_\ell(W)$ (see \cite[Corollary 2.5]{ME}).
	
	\begin{lemma}\label{lem:intersectionpararef}
		The intersection of a parabolic subgroup and reflection subgroup of $W$ is a reflection subgroup of $W$.
	\end{lemma}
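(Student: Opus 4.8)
The plan is to reduce the statement to the case of a reflection group acting on the subspace fixed appropriately, and to use the fact that a reflection subgroup $R$ of $W$ is itself a finite reflection group, so that its parabolic subgroups are well-defined. Let $P = W_U$ be the parabolic subgroup stabilizing a subspace $U \subseteq V$ pointwise, and let $R$ be a reflection subgroup of $W$. The key observation is that $P \cap R = \{r \in R \mid ru = u \text{ for all } u \in U\}$ is precisely the pointwise stabilizer in $R$ of the subspace $U$ (or, equivalently, of the subspace $U' := U \cap V_R$ where $V_R$ is the subspace spanned by the reflecting hyperplanes' normals for $R$, together with the fixed space — one must be slightly careful about which ambient space $R$ is "really" acting on). So $P \cap R$ is a parabolic subgroup of the reflection group $R$. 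By Steinberg's theorem (\cite[Theorem 1.5]{STEINBERG}), cited just before the lemma, a parabolic subgroup of a finite reflection group is generated by reflections; hence $P \cap R$ is generated by reflections lying in $R \subseteq W$, and therefore is a reflection subgroup of $W$.

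First I would make precise the claim that $R$, as an abstract subgroup of $\mathrm{GL}(V)$ generated by reflections on $V$, is a finite reflection group in the sense of the paper, acting on $V$ (the reflections of $R$ are reflections on $V$ by hypothesis). Then I would invoke the description of parabolic subgroups of $R$: for any subspace $U \subseteq V$, the set $R_U := \{r \in R \mid ru = u\}$ is by definition a parabolic subgroup of $R$. The identity $P \cap R = W_U \cap R = R_U$ is immediate from unwinding definitions, since an element of $R$ lies in $W_U$ iff it fixes $U$ pointwise. So the intersection is a parabolic subgroup of $R$, and the only remaining input is Steinberg's theorem applied to $R$ in place of $W$, which yields that $R_U$ is generated by those reflections of $R$ fixing $U$ pointwise; these are reflections on $V$ lying in $W$, so $R_U$ is a reflection subgroup of $W$.

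The main obstacle — really the only subtlety — is the bookkeeping around the ambient space: Steinberg's theorem is naturally stated for an essential reflection group, or at least one needs to know it applies to $R$ acting on $V$ even though $R$ may fix a nonzero subspace of $V$ pointwise and act trivially on a complement. This is handled by decomposing $V = V^R \oplus V_R$ where $V^R$ is the pointwise fixed space of $R$ and $V_R$ its orthogonal (or any $R$-stable) complement, restricting to the essential action of $R$ on $V_R$, applying Steinberg there to the subspace $(U \cap V_R)$ together with the obvious fact that fixing $U$ pointwise is equivalent to fixing $U \cap V_R$ pointwise (as $R$ already fixes $V^R \supseteq$ the $V^R$-component of every vector in $U$), and then transporting back. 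None of this requires computation; it is the standard observation that passing to a reflection subgroup and to its essential quotient is harmless for the notion of parabolic subgroup. I would state this reduction in one or two sentences and then conclude.
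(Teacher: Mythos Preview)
Your proof is correct and essentially identical to the paper's: both identify $P \cap R$ as a parabolic subgroup of the reflection group $R$ (namely the pointwise stabilizer in $R$ of a subspace) and then apply Steinberg's theorem to conclude it is generated by reflections. The only cosmetic difference is that the paper writes the stabilized subspace as $U + X$ where $X = V^R$ is the fixed space of $R$, but since every element of $R$ already fixes $X$ this coincides with your $R_U$; your worry about essentiality is thus already absorbed, and Steinberg applies directly to $R$ acting on $V$.
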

	
	\begin{proof}
		Let $P$ be a parabolic subgroup of $W$ and $R$ be a reflection subgroup of $W$. Hence, $P=W_U$ for some subspace $U$ of $V$. Let $X:=\{v\in V \mid wv=v \ \text{for all} \ w\in R\}$. We claim that $P\cap R=R_{U+X}$. Let $w\in P\cap R$. Since $w$ fixes $U$ and $X$ pointwise, it is clear that $w$ fixes $U+X$ pointwise, so $w\in R_{U+X}$. Now let $w\in R_{U+X}$. Then we have $w\in R$ and $wu=u$ for all $u\in U+X$. Since $U\subseteq U+X$, we have $w\in W_U=P$, and conclude $w\in P\cap R$. Therefore, $P\cap R=R_{U+X}$ is a parabolic subgroup of $R$. By \cite[Theorem 1.5]{STEINBERG}, $P\cap R$ is a reflection subgroup of $R$, and hence a reflection subgroup of $W$. 
	\end{proof}
	
	In general, it is not true that the intersection of reflection subgroups of $W$ is a reflection subgroup. For example, in the dihedral group of order $12$ with its standard reflection representation, the non-conjugate dihedral groups of order $6$ are reflection subgroups, but their intersection is a cyclic group of order $3$, which is not a reflection subgroup. 
	
	We now prove some preliminary structural properties of the parabolic subgroups minimally containing a Sylow $\ell$-subgroup.

	\begin{lemma}\label{lem:nosharing}
		Let $H$ be a subgroup of $W$. Then there exists a unique parabolic subgroup of $W$ that minimally contains $H$.
	\end{lemma}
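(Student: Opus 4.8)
The plan is to produce the desired parabolic subgroup explicitly as the intersection of \emph{all} parabolic subgroups of $W$ that contain $H$, and then check that this intersection is itself parabolic, contains $H$, and sits inside every parabolic overgroup of $H$; the last property upgrades ``minimal'' to ``minimum'', which gives uniqueness for free. Concretely, first I would set $\mathcal{C}:=\{P\in\mathcal{P}(W)\mid H\leq P\}$. This family is nonempty because $W=W_{\{0\}}$ is a parabolic subgroup of $W$ containing $H$, and it is finite because $W$ is a finite group and hence has only finitely many subgroups. Put $P_H:=\bigcap_{P\in\mathcal{C}}P$.

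The one point that deserves care is that $P_H$ is again a parabolic subgroup of $W$. This follows from the identity $W_{U_1}\cap W_{U_2}=W_{U_1+U_2}$ recalled in this section: writing each $P\in\mathcal{C}$ as $P=W_{U_P}$ for a suitable subspace $U_P\subseteq V$, induction on $|\mathcal{C}|$ (which is finite) gives $P_H=\bigcap_{P\in\mathcal{C}}W_{U_P}=W_{\sum_{P\in\mathcal{C}}U_P}$, a parabolic subgroup. Since $H\leq P$ for every $P\in\mathcal{C}$, we also have $H\leq P_H$, so in fact $P_H\in\mathcal{C}$.

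It remains to read off existence and uniqueness. By construction $P_H\leq P$ for every $P\in\mathcal{C}$, so $P_H$ is the least element of $\mathcal{C}$ with respect to inclusion; in particular it is a minimal element, giving existence. For uniqueness, if $Q\in\mathcal{C}$ is any parabolic subgroup minimally containing $H$ in the sense of Definition \ref{def:minimallycontains}, then $P_H\leq Q$ because $P_H$ is the least element of $\mathcal{C}$, and minimality of $Q$ forces $Q=P_H$. Thus $P_H$ is the unique parabolic subgroup of $W$ that minimally contains $H$. I do not anticipate a real obstacle here: the only non-formal ingredient is the closure of $\mathcal{P}(W)$ under (finite) intersection, which is already available, and everything else is the standard ``closure operator'' argument.
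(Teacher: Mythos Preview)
Your proof is correct and takes essentially the same approach as the paper: both arguments rest on the closure of $\mathcal{P}(W)$ under intersection. The paper's version is simply the contrapositive shortcut---if $P$ and $P'$ were distinct minimal parabolic overgroups of $H$, then $P\cap P'\lneq P$ would be a smaller parabolic overgroup, contradicting minimality---whereas you unfold this into the explicit construction of the minimum element $P_H=\bigcap_{P\in\mathcal{C}}P$.
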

	
	\begin{proof}
		If $H$ is contained in distinct parabolic subgroups $P$ and $P'$, then $P\cap P'\lneq P$ is a parabolic subgroup containing $H$. This contradicts the minimality of $P$ and $P'$.
	\end{proof}
	
	\begin{corollary}\label{cor:parabolicuniqueclass}
		The elements of $\mathcal{P}_\ell(W)$ are conjugate in $W$.
	\end{corollary}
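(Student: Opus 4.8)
The plan is to deduce this quickly from Lemma~\ref{lem:nosharing} together with the conjugacy of Sylow $\ell$-subgroups. First I would fix two elements $P,P'\in\mathcal{P}_\ell(W)$. By Definition~\ref{def:minimallycontains}, $P$ minimally contains some $S\in\text{Syl}_\ell(W)$ and $P'$ minimally contains some $S'\in\text{Syl}_\ell(W)$, and by Sylow's theorem there is $w\in W$ with $wSw^{-1}=S'$.

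The key step is to observe that conjugation by $w$ is an automorphism of $W$ that restricts to an order-automorphism of the poset $\mathcal{P}(W)$ of parabolic subgroups: the conjugate $wW_Uw^{-1}$ of the parabolic subgroup $W_U$ is the parabolic subgroup $W_{wU}$, and conjugation manifestly preserves inclusions. Hence $wPw^{-1}$ is a parabolic subgroup that minimally contains $wSw^{-1}=S'$. Applying Lemma~\ref{lem:nosharing} to $S'$, the parabolic subgroup minimally containing $S'$ is unique, so $wPw^{-1}=P'$, and therefore $P$ and $P'$ are conjugate in $W$.

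I do not expect a genuine obstacle here; the statement is essentially a transport-of-structure argument, and the only point deserving a sentence of care is the verification that conjugation carries the (unique) parabolic subgroup minimally containing $S$ to the (unique) one minimally containing the conjugate of $S$, which is immediate from the fact that conjugation preserves both parabolicity and containment.
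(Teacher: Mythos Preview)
Your proposal is correct and is precisely the argument the paper intends: its one-line proof (``Follows by Lemma~\ref{lem:nosharing} and the conjugacy of Sylow $\ell$-subgroups'') is exactly the transport-of-structure you spell out. The only added detail you supply is the observation that conjugation preserves parabolicity and containment, which is implicit in the paper's sketch.
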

	
	\begin{proof}
		Follows by Lemma \ref{lem:nosharing} and the conjugacy of Sylow $\ell$-subgroups.
	\end{proof}
	
	\begin{corollary}\label{cor:normalizercontainment}
		Let $P_\ell\in\mathcal{P}_\ell(W)$ contain $S_\ell\in \text{Syl}_\ell(W)$. If $H$ is a subgroup of $W$ such that $S_\ell\leq H\leq P_\ell$, then $N_W(H)\leq N_W(P_\ell)$.
	\end{corollary}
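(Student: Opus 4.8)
The plan is to deduce this from Lemma~\ref{lem:nosharing} applied to the subgroup $H$. First I would observe that, since $S_\ell \le H \le P_\ell$ and $P_\ell$ is a parabolic subgroup, any parabolic subgroup containing $H$ also contains $S_\ell$; hence the unique parabolic subgroup $Q$ minimally containing $H$ (which exists by Lemma~\ref{lem:nosharing}) satisfies $Q \le P_\ell$, and since $Q$ is a parabolic subgroup containing $S_\ell$ it must actually equal $P_\ell$ by the minimality that characterizes $P_\ell \in \mathcal{P}_\ell(W)$ together with Lemma~\ref{lem:nosharing} applied to $S_\ell$. In other words, $P_\ell$ is the unique parabolic subgroup of $W$ minimally containing $H$.

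Next I would use the standard fact that conjugation by $W$ permutes the set $\mathcal{P}(W)$ of parabolic subgroups: if $w \in N_W(H)$, then $w P_\ell w^{-1}$ is a parabolic subgroup, and conjugating the chain $S_\ell \le H \le P_\ell$ by $w$ shows $wHw^{-1} = H \le w P_\ell w^{-1}$, so $w P_\ell w^{-1}$ is a parabolic subgroup minimally containing $H$ (minimality is preserved since conjugation is an order isomorphism of $\mathcal{P}(W)$). By the uniqueness established in the previous step, $w P_\ell w^{-1} = P_\ell$, i.e. $w \in N_W(P_\ell)$. This gives $N_W(H) \le N_W(P_\ell)$.

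I should be a little careful about one point: to conclude $w P_\ell w^{-1}$ \emph{minimally} contains $H$, I need that $wQ'w^{-1} \supseteq wHw^{-1} = H$ implies $Q' \supseteq H$ for parabolic $Q'$, which is immediate, and that if $H \le Q'' \lneq w P_\ell w^{-1}$ with $Q''$ parabolic then $w^{-1}Q''w \lneq P_\ell$ contains $w^{-1}Hw = H$, contradicting minimality of $P_\ell$. So really the only nontrivial ingredient is that $\mathcal{P}(W)$ is stable under conjugation, which is recalled in the introduction, and the uniqueness from Lemma~\ref{lem:nosharing}. There is no serious obstacle here; the main thing to get right is the bookkeeping that $P_\ell$ is simultaneously the minimal parabolic over $S_\ell$ and over $H$, so that the two applications of Lemma~\ref{lem:nosharing} line up.
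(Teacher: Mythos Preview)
Your argument is correct and is essentially the same as the paper's: both deduce the result from Lemma~\ref{lem:nosharing} by observing that $P_\ell$ is the unique minimal parabolic over $H$ and that $wP_\ell w^{-1}$ is another such parabolic for $w\in N_W(H)$. The paper's proof is simply terser, phrasing the uniqueness as ``$P_\ell$ is the unique element of $\mathcal{P}_\ell(W)$ containing $H$'' and using that $wP_\ell w^{-1}\in\mathcal{P}_\ell(W)$ directly, whereas you verify minimality of $wP_\ell w^{-1}$ over $H$ explicitly; the content is the same.
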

	
	\begin{proof}
		If $w\in N_W(H)$, then $H\leq wP_\ell w^{-1}$ and the result follows from Lemma \ref{lem:nosharing}, since $P_\ell$ is the unique element of $\mathcal{P}_\ell(W)$ containing $H$.
	\end{proof}
	
	Special cases of Corollary \ref{cor:normalizercontainment} are when $H=S_\ell\in \text{Syl}_\ell(W)$, and so $N_W(S_\ell)\leq N_W(P_\ell)$, or when $H=R_\ell\in\mathcal{R}_\ell(W)$, and so $N_W(R_\ell)\leq N_W(P_\ell)$.
	
	We will now introduce some standard notation in the specific setting of finite real reflection groups, referring the reader to \cite[Chap. 1-2]{JEH} for details. For the rest of this section as well as Sections \ref{sylowstableunderauto} and \ref{normalizersofsylowsubgroupsinreal}, let $W$ be a finite real reflection group acting on the real vector space $V=\mathbb{R}^n$ for some $n\in \N$. Let $\Phi\subseteq V$ be a root system of the finite real reflection group $W$. The reflection associated to the root $\alpha\in \Phi$ is denoted by $s_\alpha$. Let $\Delta\subseteq \Phi$ be a simple system with set of simple reflections $\{s_\alpha \mid \alpha\in \Delta\}$ generating $W$. Given a simple system $\Delta\subseteq\Phi$, a \emph{standard parabolic subgroup} is a reflection subgroup generated by the reflections associated to the roots in some subset $\Lambda\subseteq \Delta$. The parabolic subgroups of $W$ are $W$-conjugates of standard parabolic subgroups. The standard parabolic subgroups of $W$ correspond to the subgraphs of its Coxeter diagram. The irreducible finite reflection groups have a well-known classification in terms of connected positive-definite Coxeter diagrams. This classification can be found in standard texts on reflection groups and Coxeter groups such as \cite[2.5-2.7]{JEH}. The Coxeter diagrams are included in Figure~\ref{fig:coxdiagrams}, with labels on the vertices to order the simple roots. We refer to types $A_{n-1}, B_n$ and $D_n$ as \emph{classical cases} and the others as \emph{exceptional cases}.
	
	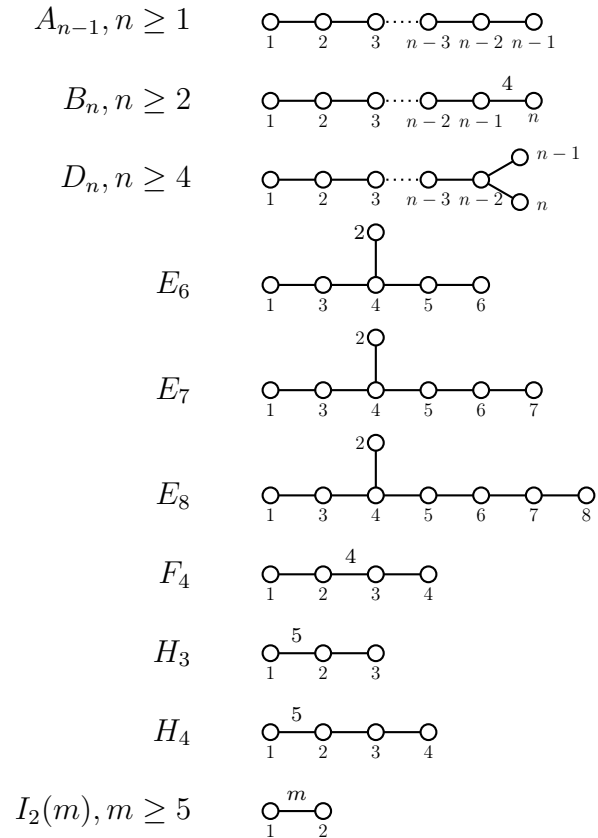
\begin{figure}[h!]
		\caption{Coxeter diagrams of irreducible finite reflection groups}
		\begin{center}
			\begin{tikzpicture}[scale=.35]
				
				\draw (-2.6,0) node[anchor=east]  {$A_{n-1}, n\geq1$};
				\foreach \x in {0,...,5}
				\draw[xshift=\x cm,thick] (\x cm,0) circle (.3cm);
				\draw[dotted,thick] (4.4 cm,0) -- +(1.4 cm,0);
				\foreach \y in {0.15,1.15,3.15,4.15}
				\draw[xshift=\y cm,thick] (\y cm,0) -- +(1.4 cm,0);
				\draw (0,-0.1) node[below] {\scalebox{0.6}{1}};
				\draw (2,-0.1) node[below] {\scalebox{0.6}{2}};
				\draw (4,-0.1) node[below] {\scalebox{0.6}{3}};
				\draw (6,-0.1) node[below] {\scalebox{0.6}{$n-3$}};
				\draw (8,-0.1) node[below] {\scalebox{0.6}{$n-2$}};
				\draw (10,-0.1) node[below] {\scalebox{0.6}{$n-1$}};
				
				\draw (-2.6,-3) node[anchor=east]  {$B_n, n\geq 2$};
				\foreach \x in {0,...,5}
				\draw[xshift=\x cm,thick] (\x cm,-3) circle (.3cm);
				\draw[dotted,thick] (4.4 cm,-3) -- +(1.4 cm,0);
				\foreach \y in {0.15,1.15,3.15,4.15}
				\draw[xshift=\y cm,thick] (\y cm,-3) -- +(1.4 cm,0);
				\draw (9,-3) node[above] {\tiny{4}};
				\draw (0,-3.1) node[below] {\scalebox{0.6}{1}};
				\draw (2,-3.1) node[below] {\scalebox{0.6}{2}};
				\draw (4,-3.1) node[below] {\scalebox{0.6}{3}};
				\draw (6,-3.1) node[below] {\scalebox{0.6}{$n-2$}};
				\draw (8,-3.1) node[below] {\scalebox{0.6}{$n-1$}};
				\draw (10,-3.1) node[below] {\scalebox{0.6}{$n$}};
				
				\draw (-2.6,-6) node[anchor=east]  {$D_n, n\geq4$};
				\foreach \x in {0,1,...,4}
				\draw[xshift=\x cm,thick] (\x cm,-6) circle (.3cm);
				\draw[xshift=8 cm,yshift=-6 cm,thick] (30: 17 mm) circle (.3cm);
				\draw[xshift=8 cm,yshift=-6 cm,thick] (-30: 17 mm) circle (.3cm);
				\draw[dotted,thick] (4.4 cm,-6) -- +(1.4 cm,0);
				\foreach \y in {0.15,1.15,3.15}
				\draw[xshift=\y cm,thick] (\y cm,-6) -- +(1.4 cm,0);
				\draw[xshift=8 cm,yshift=-6 cm,thick] (30: 3 mm) -- (30: 14 mm);
				\draw[xshift=8 cm,yshift=-6 cm,thick] (-30: 3 mm) -- (-30: 14 mm);	
				\draw (0,-6.1) node[below] {\scalebox{0.6}{1}};
				\draw (2,-6.1) node[below] {\scalebox{0.6}{2}};
				\draw (4,-6.1) node[below] {\scalebox{0.6}{3}};
				\draw (6,-6.1) node[below] {\scalebox{0.6}{$n-3$}};
				\draw (8,-6.1) node[below] {\scalebox{0.6}{$n-2$}};
				\draw (9.7,-5) node[right] {\scalebox{0.6}{$n-1$}};
				\draw (9.7,-7) node[right] {\scalebox{0.6}{$n$}};
				
				\draw (-2.6,-10) node[anchor=east]  {$E_6$};
				\foreach \x in {0,...,4}
				\draw[thick,xshift=\x cm] (\x cm,-10) circle (3 mm);
				\foreach \y in {0,...,3}
				\draw[thick,xshift=\y cm] (\y cm,-10) ++(.3 cm, 0) -- +(14 mm,0);
				\draw[thick] (4 cm,-8 cm) circle (3 mm);
				\draw[thick] (4 cm, -9.7 cm) -- +(0, 1.4 cm);
				\draw (2,-10.1) node[below] {\scalebox{0.6}{3}};
				\draw (4,-10.1) node[below] {\scalebox{0.6}{4}};
				\draw (6,-10.1) node[below] {\scalebox{0.6}{5}};
				\draw (8,-10.1) node[below] {\scalebox{0.6}{6}};
				\draw (0,-10.1) node[below] {\scalebox{0.6}{1}};
				\draw (4,-8) node[left] {\tiny{2}};
				
				\draw (-2.6,-14) node[anchor=east]  {$E_7$};
				\foreach \x in {0,...,5}
				\draw[thick,xshift=\x cm] (\x cm,-14) circle (3 mm);
				\foreach \y in {0,...,4}
				\draw[thick,xshift=\y cm] (\y cm,-14) ++(.3 cm, 0) -- +(14 mm,0);
				\draw[thick] (4 cm,-12 cm) circle (3 mm);
				\draw[thick] (4 cm, -13.7 cm) -- +(0, 1.4 cm);
				\draw (2,-14.1) node[below] {\scalebox{0.6}{3}};
				\draw (4,-14.1) node[below] {\scalebox{0.6}{4}};
				\draw (6,-14.1) node[below] {\scalebox{0.6}{5}};
				\draw (8,-14.1) node[below] {\scalebox{0.6}{6}};
				\draw (0,-14.1) node[below] {\scalebox{0.6}{1}};
				\draw (4,-12) node[left] {\scalebox{0.6}{2}};
				\draw (10,-14.1) node[below] {\scalebox{0.6}{7}};
				
				\draw (-2.6,-18) node[anchor=east]  {$E_8$};
				\foreach \x in {0,1,...,6}
				\draw[thick,xshift=\x cm] (\x cm,-18) circle (3 mm);
				\foreach \y in {0,1,...,5}
				\draw[thick,xshift=\y cm] (\y cm,-18) ++(.3 cm, 0) -- +(14 mm,0);
				\draw[thick] (4 cm,-16 cm) circle (3 mm);
				\draw[thick] (4 cm, -17.7 cm) -- +(0, 1.4 cm);
				\draw (2,-18.1) node[below] {\scalebox{0.6}{3}};
				\draw (4,-18.1) node[below] {\scalebox{0.6}{4}};
				\draw (6,-18.1) node[below] {\scalebox{0.6}{5}};
				\draw (8,-18.1) node[below] {\scalebox{0.6}{6}};
				\draw (0,-18.1) node[below] {\scalebox{0.6}{1}};
				\draw (4,-16) node[left] {\scalebox{0.6}{2}};
				\draw (10,-18.1) node[below] {\scalebox{0.6}{7}};
				\draw (12,-18.1) node[below] {\scalebox{0.6}{8}};
				
				\draw (-2.6,-21) node[anchor=east]  {$F_4$};
				\foreach \x in {0,...,3}
				\draw[xshift=\x cm,thick] (\x cm,-21) circle (.3cm);
				\foreach \y in {0.15,2.15}
				\draw[xshift=\y cm,thick] (\y cm,-21) -- +(1.4 cm,0);
				\draw[thick] (2.3 cm,-21) --  +(1.41 cm,0);
				\draw (3.05,-21) node[above] {\tiny{4}};
				\draw (0,-21.1) node[below] {\scalebox{0.6}{1}};
				\draw (2,-21.1) node[below] {\scalebox{0.6}{2}};
				\draw (4,-21.1) node[below] {\scalebox{0.6}{3}};
				\draw (6,-21.1) node[below] {\scalebox{0.6}{4}};
				
				\draw (-2.6,-24) node[anchor=east]  {$H_3$};
				\foreach \x in {0,1,2}
				\draw[xshift=\x cm,thick] (\x cm,-24) circle (.3cm);
				\draw[thick] (0.3,-24) -- +(1.41 cm,0);
				\draw[thick] (2.3 cm,-24) -- +(1.41 cm,0);
				\draw (1,-24) node[above] {\tiny{5}};
				\draw (0,-24.1) node[below] {\scalebox{0.6}{1}};
				\draw (2,-24.1) node[below] {\scalebox{0.6}{2}};
				\draw (4,-24.1) node[below] {\scalebox{0.6}{3}};
				
				\draw (-2.6,-27) node[anchor=east]  {$H_4$};
				\foreach \x in {0,1,2,3}
				\draw[xshift=\x cm,thick] (\x cm,-27) circle (.3cm);
				\draw[thick] (0.3,-27) -- +(1.41 cm,0);
				\draw[thick] (2.3 cm,-27) -- +(1.41 cm,0);
				\draw[thick] (4.3 cm,-27) -- +(1.41 cm,0);
				\draw (1,-27) node[above] {\tiny{5}};
				\draw (0,-27.1) node[below] {\scalebox{0.6}{1}};
				\draw (2,-27.1) node[below] {\scalebox{0.6}{2}};
				\draw (4,-27.1) node[below] {\scalebox{0.6}{3}};
				\draw (6,-27.1) node[below] {\scalebox{0.6}{4}};
				
				\draw (-2.6,-30) node[anchor=east]  {$I_2(m), m\geq5$};
				\foreach \x in {0,1}
				\draw[xshift=\x cm,thick] (\x cm,-30) circle (.3cm);
				\draw[thick] (0.3 cm,-30) -- +(1.41 cm,0);
				\draw (1,-30) node[above] {\tiny{$m$}};
				\draw (0,-30.1) node[below] {\scalebox{0.6}{1}};
				\draw (2,-30.1) node[below] {\scalebox{0.6}{2}};
			\end{tikzpicture}
		\end{center}
		\label{fig:coxdiagrams}
	\end{figure}
	
	The non-trivial Coxeter diagram automorphisms $\rho$ of an irreducible $W$ with a fixed simple system are characterized by the type of $W$ and the order of $\rho$. They are given by:
	\begin{itemize}
		\item[(i)] $A_{n-1}$ with $n\geq 3$ and $|\rho|=2$.
		\item[(ii)] $D_n$ with $n\geq 4$ and $|\rho|=2$.
		\item[(iii)] $D_4$ and $|\rho|=3$.
		\item[(iv)] $I_2(m)$ with $m\geq 4$ and $|\rho|=2$.
		\item[(v)] $F_4$ with $|\rho|=2$. 
		\item[(vi)] $E_6$ with $|\rho|=2$.
	\end{itemize}
	These Coxeter diagram automorphisms will play an important role in proving Theorem~\ref{thm:normalizersylowreal} via their connection to the H-complement, which we will now describe. 
	
	A finite reflection group generated by the reflections associated to a set of vectors $X\subseteq V$ will be denoted by $W(X)$. Fix $W$ with root system $\Phi$ and simple system $\Delta$. Then $W=W(\Phi)=W(\Delta)$. The structure of the normalizer of a reflection subgroup $W(\Lambda)\leq W$ with simple system $\Lambda\subseteq \Phi$ has decomposition $N_W(W(\Lambda))=W(\Lambda)\rtimes U_\Lambda,$ where $U_\Lambda:=\{w\in W \mid w\Lambda=\Lambda\}$. This was shown in \cite[Corollary 3]{HOWLETT} for parabolic subgroups. The statement also holds for reflection subgroups, as noted in \cite[Remark 3.5]{GHM3}. It is clear that $U_\Lambda$ induces Coxeter diagram automorphisms on the reflection subgroups. We note that in \cite{HOWLETT}, $U_\Lambda$ is described in detail for every irreducible finite real reflection group. 
	
	Finally, we have the following elementary result regarding the stability of Sylow $\ell$-subgroups under group automorphisms.
	
	\begin{lemma}\label{lem:groupautosylow}
		Let $\rho$ a group automorphism of a finite group $G$. If $|\rho|=\ell^i$ for some $i\in\N$ and prime $\ell$, then there exists an $S_\ell\in\text{Syl}_\ell(G)$ such that $\rho(S_\ell)=S_\ell$.
	\end{lemma}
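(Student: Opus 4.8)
The plan is to embed $G$ into a larger finite group in which $\rho$ becomes conjugation by an element of $\ell$-power order, and then invoke Sylow's theorem in that overgroup. Concretely, let $\langle\rho\rangle\le\mathrm{Aut}(G)$ be the cyclic subgroup generated by $\rho$, and form the external semidirect product $\widehat{G}:=G\rtimes\langle\rho\rangle$, with $\langle\rho\rangle$ acting on $G$ in the natural way. By construction $G\trianglelefteq\widehat{G}$, and conjugation by the canonical copy of $\rho$ inside $\widehat{G}$ restricts on $G$ to the automorphism $\rho$. Since $|\langle\rho\rangle|=\ell^i$, the subgroup $\langle\rho\rangle$ is an $\ell$-subgroup of $\widehat{G}$, hence is contained in some $P\in\text{Syl}_\ell(\widehat{G})$.

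Next I would verify that $S_\ell:=P\cap G$ is a Sylow $\ell$-subgroup of $G$. Writing $|G|=\ell^a m$ with $\ell\nmid m$, we get $|\widehat{G}|=\ell^{a+i}m$ and $|P|=\ell^{a+i}$. On one hand $P\cap G$ is an $\ell$-subgroup of $G$, so $|P\cap G|$ divides $\ell^a$; on the other hand $|P/(P\cap G)|=|PG/G|$ divides $[\widehat{G}:G]=\ell^i$, so $|P\cap G|\ge\ell^{a+i}/\ell^i=\ell^a$. Hence $|P\cap G|=\ell^a$ and $S_\ell\in\text{Syl}_\ell(G)$.

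Finally, since $\langle\rho\rangle\le P$ and both $P$ and $G$ are normalized by $\rho$ inside $\widehat{G}$ (the former because $\rho\in P$, the latter because $G$ is normal), conjugation by $\rho$ sends $S_\ell=P\cap G$ to itself; as this conjugation agrees with the automorphism $\rho$ on $G$, we conclude $\rho(S_\ell)=S_\ell$, as required. I do not anticipate a genuine obstacle: this is the standard "Sylow subgroups of a normal subgroup can be matched with a prescribed $\ell$-subgroup upstairs" argument, and the only points needing minor care are the identification of inner conjugation in $\widehat{G}$ with the given $\rho$, and the trivial case $\rho=\mathrm{id}$, where any $S_\ell$ works.
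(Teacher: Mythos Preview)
Your proof is correct, but the paper takes a shorter and more elementary route: since any automorphism permutes $\text{Syl}_\ell(G)$ and $\rho$ has order $\ell^i$, every $\langle\rho\rangle$-orbit on $\text{Syl}_\ell(G)$ has size a power of $\ell$; as $|\text{Syl}_\ell(G)|\equiv 1\pmod{\ell}$ by Sylow's theorem, at least one orbit must be a singleton, giving the fixed Sylow subgroup. Your semidirect-product argument is a genuinely different (and equally standard) approach. The paper's orbit count is a two-line application of the congruence $n_\ell\equiv 1\pmod{\ell}$ and needs no auxiliary construction. Your method, by contrast, produces the $\rho$-stable Sylow subgroup explicitly as $P\cap G$ for a chosen $P\in\text{Syl}_\ell(\widehat G)$ containing $\rho$, and it extends verbatim when $\langle\rho\rangle$ is replaced by any $\ell$-group of automorphisms of $G$, not just a cyclic one of $\ell$-power order.
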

	\begin{proof}
		The size of orbits of $\rho$ acting on $\text{Syl}_\ell(G)$ are in $\{\ell^j\mid  0 \leq j\leq i\}$.  Since $|\text{Syl}_\ell(G)|=k\ell+1$ for some $k\in\N$, the result follows.
	\end{proof}

	\section{Sylow subgroups stable under Coxeter diagram automorphisms }\label{sylowstableunderauto}
	
	In this section we investigate the existence of Sylow $\ell$-subgroups of finite real reflection groups stable under Coxeter diagram automorphisms. This will lead us to a proof of Theorem \ref{thm:sylowstablediagramautroeal}. Throughout this section let $W$ be a finite real reflection group.
	
	\begin{example}
		Consider $W_1$ of type $E_6$ with $\rho_{E_6}$ being the order $2$ Coxeter diagram automorphism. Then straightforward MAGMA calculations show that there are no Sylow $5$-subgroups of $W_1$ stable under $\rho_{E_6}$. In contrast, consider $W_2$ of type $A_4$. The Coxeter diagram automorphism $\rho_{A_4}$ will normalize the Sylow $5$-subgroup of $W_1$ given by $\langle s_1s_2s_3s_4 \rangle$. A key difference between $W_1$ and $W_2$ is that $W_1$ is not $5$-cuspidal, while $W_2$ is $5$-cuspidal.
	\end{example}
	
	In fact there are many examples of irreducible $W$ that have no Sylow $\ell$-subgroup stable under its Coxeter diagram automorphisms. Motivated by this example, we ask if all $\ell$-cuspidal finite real reflection groups have a Sylow $\ell$-subgroup stable under the Coxeter diagram automorphisms. This turns out to be almost true (see Theorem \ref{thm:sylowstablediagramautroeal}).

	\begin{remark}\label{rem:reductiontoirred}
		To prove Theorem \ref{thm:sylowstablediagramautroeal}, it is sufficient to determine the existence of a Sylow $\ell$-subgroup stable under Coxeter diagram automorphism for each irreducible $\ell$-cuspidal finite real reflection group. This is illustrated by the following reasoning. Let $W$ have decomposition $W_1^{n_1}\times\dots\times W_k^{n_k}$ into irreducible components and suppose each irreducible component has a Sylow $\ell$-subgroup stable under its Coxeter diagram automorphisms. We note that if $W$ is $\ell$-cuspidal, then so is each of its irreducible factors. For each $1\leq i\leq k$, select a Sylow $\ell$-subgroup $^{(1)}S_i\in \text{Syl}_\ell(W_i)$ that is stable under all Coxeter diagram automorphisms of this copy of $W_i$. For each $1\leq i\leq k$, permute $^{(1)}S_i$ to the other $n_i-1$ copies of $W_i$ via Coxeter diagram automorphisms that switch components of the same type. This gives Sylow $\ell$-subgroups $^{(j_i)}S_i$ of each copy of $W_i$, where $1\leq j_i\leq n_i$. A Sylow $\ell$-subgroup in $W$ that is stable under all Coxeter diagram automorphism of $W$ is then given by $ \prod_{i=1}^{k} {\prod_{j_i=1}^{n_i}} ^{(j_i)}S_i$.
	\end{remark}
	
	Let us now work towards proving Theorem \ref{thm:sylowstablediagramautroeal}. The irreducible $\ell$-cuspidal finite reflection groups are classified in \cite[Table 2]{ME}. Of these cases, the types of $W$ with non-trivial Coxeter diagram automorphisms are the following:
	\begin{itemize}
		\item[(a)] $A_{\ell^i-1}$ for all $\ell$.
		\item[(b)] $D_n$ for $\ell=2$.
		\item[(c)] $I_2(m)$ for $\ell=2$ when $m$ is even and $\ell>2$ for all $m$.
		\item[(d)] $F_4$ for $\ell=2,3$.
		\item[(e)] $E_6$ for $\ell=3$.
	\end{itemize}
	Note that we have included type $B_2$ as $I_2(m)$ with $m=4$. Lemma \ref{lem:groupautosylow} resolves the cases (a) when $\ell=2$, (b) when $n>4$, (c) when $\ell=2$, and (d) when $\ell=2$. For case (d) when $W$ is of type $F_4$ and $\ell=3$, it can be checked using MAGMA that no Sylow $3$-subgroup is stable under $\rho$. 
	
	Consider the remaining part of case (a). We will determine a Sylow $\ell$-subgroup of $W$ that is stable under the order two Coxeter diagram automorphism of $W$. The finite reflection group of type $A_{n-1}$ is isomorphic to the symmetric group, which we denote by $\sigma_n$. The canonical isomorphism maps the simple reflections $s_i$ for $1\leq i\leq n-1$ to the transpositions $(i \; i+1)$. The following well-known description of the Sylow $\ell$-subgroups of a symmetric group (stated below and found in \cite[Pg. 82]{HALL}) gives a useful understanding to address this case. Throughout, let the base-$\ell$ expression of $n$ be $(b_kb_{k-1}\dots b_1b_0)_\ell$.
	
	\begin{proposition}\label{prop:sylowofsymmetric}
		The Sylow $\ell$-subgroups of $\sigma_n$ are isomorphic to $\prod_{i=1}^{k} [\mathcal{C}_{\ell}^{(i)}]^{b_i}$,
		where $\mathcal{C}_{\ell}^{(i)}$ is the iterated wreath product of $i$ copies of the cyclic group of order $\ell$. In particular, $\mathcal{C}_{\ell}^{(i)}$ is a Sylow $\ell$-subgroup of $\sigma_{\ell^i}$.
	\end{proposition}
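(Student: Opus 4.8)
The statement is the classical theorem of Kaloujnine on Sylow subgroups of symmetric groups, and strictly it could be quoted from \cite{HALL}; nevertheless here is the argument I would present. The plan is an induction on the number of base-$\ell$ digits of $n$, resting on two order computations. First I would invoke Legendre's formula for the $\ell$-adic valuation of $n!$, namely $v_\ell(n!)=\sum_{j\ge 1}\lfloor n/\ell^j\rfloor=(n-s_\ell(n))/(\ell-1)$, where $s_\ell(n)=b_0+b_1+\dots+b_k$ is the digit sum of $n=(b_kb_{k-1}\dots b_0)_\ell$; this fixes the target exponent for the order of a Sylow $\ell$-subgroup of $\sigma_n$.

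Next I would handle the prime-power case $n=\ell^i$. Since $\mathcal{C}_\ell^{(i)}$ is by definition the wreath product $\mathcal{C}_\ell^{(i-1)}\wr C_\ell$, with base group $(\mathcal{C}_\ell^{(i-1)})^{\ell}$ and acting group $C_\ell$ of order $\ell$, an easy induction gives $|\mathcal{C}_\ell^{(i)}|=|\mathcal{C}_\ell^{(i-1)}|^{\ell}\cdot\ell=\ell^{1+\ell+\dots+\ell^{i-1}}=\ell^{(\ell^i-1)/(\ell-1)}$. For the embedding, partition $\{1,\dots,\ell^i\}$ into $\ell$ blocks of size $\ell^{i-1}$; the setwise stabiliser of this block system is $\sigma_{\ell^{i-1}}\wr\sigma_\ell$, so by the inductive hypothesis (the base case $i=1$ being $C_\ell\le\sigma_\ell$, generated by an $\ell$-cycle) we obtain $\mathcal{C}_\ell^{(i)}=\mathcal{C}_\ell^{(i-1)}\wr C_\ell\le\sigma_{\ell^{i-1}}\wr\sigma_\ell\le\sigma_{\ell^i}$. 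Comparing $|\mathcal{C}_\ell^{(i)}|$ with Legendre's formula at $n=\ell^i$ (digit sum $1$) shows equality, so $\mathcal{C}_\ell^{(i)}$ is a Sylow $\ell$-subgroup of $\sigma_{\ell^i}$, which is the last sentence of the statement.

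Finally, for general $n$ I would use $n=\sum_i b_i\ell^i$ to partition $\{1,\dots,n\}$ into $b_i$ blocks of size $\ell^i$ for each $i$; the corresponding Young-type subgroup $\prod_i(\sigma_{\ell^i})^{b_i}$ embeds in $\sigma_n$, and taking the direct product of the Sylow subgroups just constructed yields $P:=\prod_i(\mathcal{C}_\ell^{(i)})^{b_i}\le\sigma_n$ of order $\ell^{\sum_i b_i(\ell^i-1)/(\ell-1)}=\ell^{(n-s_\ell(n))/(\ell-1)}=\ell^{v_\ell(n!)}$. Hence $P$ is a Sylow $\ell$-subgroup, and by Sylow's theorem every Sylow $\ell$-subgroup of $\sigma_n$ is conjugate, in particular isomorphic, to $P=\prod_{i=1}^{k}[\mathcal{C}_\ell^{(i)}]^{b_i}$. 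There is no genuine obstacle here; the only points requiring care are the consistent set-up of the imprimitive (block-system) embeddings of wreath products into symmetric groups and the bookkeeping in the exponent, both of which reduce to Legendre's formula.
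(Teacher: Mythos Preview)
Your proof is correct and complete; it is the standard argument for Kaloujnine's theorem, combining Legendre's formula with the imprimitive wreath-product embedding. The paper itself does not give a proof of this proposition at all: it is stated as a well-known fact with a reference to \cite[Pg.~82]{HALL}, so your write-up in fact supplies strictly more than the paper does. Your opening remark that the result could simply be quoted from \cite{HALL} is exactly the route the paper takes.
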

	
	\begin{proposition}\label{prop:sylowinsymmetricisnormalized}
		Let $W$ be of type $A_{\ell^i-1}$ with $\ell>2$ and $i\geq1$. Then there is a Sylow $\ell$-subgroup of $W$ stable under the order two Coxeter diagram automorphism.
	\end{proposition}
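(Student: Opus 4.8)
\section*{Proof proposal}

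The plan is to work through the isomorphism $W\cong\sigma_n$ with $n=\ell^i$, under which $s_j$ corresponds to the transposition $(j\ \ j{+}1)$, and to identify the order two Coxeter diagram automorphism $\rho$ explicitly. Since $\rho$ sends $s_j$ to $s_{n-j}$, and since conjugation in $\sigma_n$ by the longest element $w_0\colon k\mapsto n{+}1{-}k$ sends $(j\ \ j{+}1)$ to $(n{-}j\ \ n{-}j{+}1)=s_{n-j}$, the two automorphisms of $W$ agree on the generating set of simple reflections, so $\rho$ is conjugation by $w_0$. Relabelling the points by $\{0,1,\dots,n-1\}$, the element $w_0$ becomes the reversal $r\colon k\mapsto(\ell^i-1)-k$. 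The first key observation is that, because $\ell^i-1=\sum_{j=0}^{i-1}(\ell-1)\ell^j$ has every base-$\ell$ digit equal to $\ell-1$, subtracting from it produces no carries: writing $k=\sum_{j=0}^{i-1}x_j\ell^j$ with $x_j\in\mathbb Z/\ell$, the reversal acts \emph{digit-wise}, by $x_j\mapsto(\ell-1)-x_j$ on every digit simultaneously. In particular the affine involution $\phi\colon t\mapsto(\ell-1)-t$ of $\mathbb Z/\ell$ normalises the cyclic translation group $\langle t\mapsto t+1\rangle$, inverting its generator.

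Next I would produce an explicit Sylow $\ell$-subgroup adapted to this digit description. Identify $\{0,\dots,\ell^i-1\}$ with the leaves of the rooted $\ell$-ary tree of depth $i$, a leaf being its sequence of base-$\ell$ digits (most significant first), and let $P\le\sigma_{\ell^i}$ be generated by the permutations $\tau_v$ indexed by the internal nodes $v$ of the tree, where $\tau_v$ adds $1$ modulo $\ell$ to the digit labelling the children of $v$, acting on the block of leaves below $v$ and fixing every other leaf. Then $P$ is the iterated wreath product $C_\ell\wr\cdots\wr C_\ell$ of $i$ copies of $C_\ell$, so $P\in\text{Syl}_\ell(\sigma_{\ell^i})$ by Proposition \ref{prop:sylowofsymmetric}.

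The remaining step is to check that $r$ normalises $P$, which then gives $\rho(P)=rPr^{-1}=P$ as required. Because $r$ acts digit-wise through $\phi$, a direct computation shows that $r\tau_v r^{-1}$ subtracts $1$ from the same digit position, but on the block below the node $\phi(v)$ obtained by applying $\phi$ to each entry of the label of $v$; that is, $r\tau_v r^{-1}=\tau_{\phi(v)}^{-1}\in P$. Hence $r$ permutes the generators of $P$ up to inversion and $rPr^{-1}=P$. Alternatively one can run this as an induction on $i$: split $\{0,\dots,\ell^i-1\}$ into the $\ell$ consecutive intervals of length $\ell^{i-1}$, observe that $r$ swaps the $a$-th and $(\ell{-}1{-}a)$-th interval while restricting on each to the length-$\ell^{i-1}$ reversal, and build $P$ as the product over these intervals of a reversal-stable Sylow subgroup of $\sigma_{\ell^{i-1}}$ supplied by the inductive hypothesis, extended by the diagonal block-shift $k\mapsto k+\ell^{i-1}$, which $r$ conjugates to its inverse by the same digit computation.

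I expect the only genuine work to be bookkeeping: pinning down the indexing conventions and confirming that the explicitly described $P$ really is a Sylow subgroup via the wreath-product order count of Proposition \ref{prop:sylowofsymmetric}. The conceptual crux is the no-carry digit-wise action of the reversal, which holds precisely because $n=\ell^i$, combined with the elementary fact that an affine involution of $\mathbb Z/\ell$ normalises the translation subgroup; note that the hypothesis $\ell>2$ is only there because $\ell=2$ is already handled by Lemma \ref{lem:groupautosylow}, the argument above being insensitive to the parity of $\ell$.
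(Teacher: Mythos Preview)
Your proposal is correct and follows essentially the same strategy as the paper: identify the diagram automorphism with conjugation by the reversal permutation in $\sigma_{\ell^i}$, and exhibit the standard iterated-wreath-product Sylow $\ell$-subgroup as one it normalises. Your ``alternative'' inductive description is almost exactly the paper's argument, which partitions $\{1,\dots,\ell^i\}$ into $\ell$ consecutive blocks, places an inductively-supplied reversal-stable Sylow in the middle block, pairs off the outer blocks via $\rho_i$, and adjoins the diagonal block-cycle.

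Where you differ is in packaging: the paper carries out the induction explicitly with block-by-block conjugations, whereas your primary argument observes once and for all that the reversal $k\mapsto \ell^i-1-k$ acts digit-wise in base $\ell$ (no carries, since $\ell^i-1$ has every digit equal to $\ell-1$), and that the tree generators $\tau_v$ are permuted among themselves up to inversion by this digit-wise involution. This is a cleaner and more uniform verification---it avoids having to argue separately that the middle block $S_{(\ell-1)/2}$ is stable and that the block-cycle is sent to its inverse---but the underlying Sylow subgroup and the mechanism of stability are the same in both proofs.
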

	
	\begin{proof}
		We use the canonical isomorphism between $W$ and $\sigma_{\ell^i}$. By Proposition \ref{prop:sylowofsymmetric}, a Sylow $\ell$-subgroup of $\sigma_{\ell^i}$ is isomorphic to $\mathcal{C}_{\ell}^{(i)}$. The order two Coxeter diagram automorphism under the canonical isomorphism acts via conjugation by \[\rho_i:=(1 \; \ell^i)(2 \; \ell^i-1)\dots (\lfloor\frac{\ell^i}{2}\rfloor \; \lceil\frac{\ell^i}{2}\rceil+1).\] We inductively build a Sylow $\ell$-subgroup that is stable under $\rho_i$. When $i=1$, take the cycle $(1 \; 2 \; \dots \; \ell)$ as the generator of the Sylow $\ell$-subgroup stable under $\rho_1$. Now assume $i>1$ and partition $\{1,2,..,\ell^i\}$ into $\ell$ blocks of size $\ell^{i-1}$ given by $X_j:=\{1+j\ell^{i-1},2+j\ell^{i-1},\dots,\ell^{i-1}+j\ell^{i-1}\}$ for $0\leq j\leq \ell-1$. Let $^{(j)}\sigma_{\ell^{i-1}}$ be the symmetric group permuting $X_j$. Suppose there is a Sylow $\ell$-subgroup $S_0$ of $^{(0)}\sigma_{\ell^{i-1}}$ stable under $\rho_{i-1}$. For each $0\leq j\leq \frac{\ell-1}{2}$, conjugate $S_0$ by \[(1\;1+j\ell^{i-1})(2\;2+j\ell^{i-1})\dots(\ell^{i-1}\;\ell^{i-1}+j\ell^{i-1})\] to give a Sylow $\ell$-subgroup $S_j$ of $^{(j)}\sigma_{\ell^{i-1}}$. For each $0\leq j< \frac{\ell-1}{2}$, conjugate $S_j$ by $\rho_i$ to give a Sylow $\ell$-subgroup $S_{\ell-j}$ of $^{(\ell-j)}\sigma_{\ell^{i-1}}$. By our construction, $S_{\frac{\ell-1}{2}}$ is stable under $\rho_i$. The group generated by each $S_j$ for $0\leq j\leq \ell-1$ as well as \[(1\; 1+\ell^{i-1} \; \dots \; 1+(\ell-1)\ell^{i-1})(2\; 2+\ell^{i-1} \; \dots \; 2+(\ell-1)\ell^{i-1})\dots(\ell^{i-1} \; 2\ell^{i-1}\;\dots\;\ell^i)\] is a Sylow $\ell$-subgroup of $\sigma_{\ell^i}$ stable under $\rho_i$.
	\end{proof}
	
	For clarity, we give an example of the Sylow $\ell$-subgroup constructed in the proof of Proposition \ref{prop:sylowinsymmetricisnormalized} when $i=2$ and $\ell=3$.
	
	\begin{example}
		Consider $W$ of type $A_8$ and its canonical isomorphism to $\sigma_{9}$. The order two Coxeter diagram automorphism acts as conjugation by $\rho=(1\;9)(2\;8)(3\;7)(4\;6)$. Select the Sylow $3$-subgroup $S_0$ of $\sigma_3$ acting on $\{1,2,3\}$ generated by $(1\; 2 \; 3)$. Then conjugate $S_0$ by $(1\; 4)(2\;5)(3\;6)$ to get a Sylow $3$-subgroup $S_1$ of $\sigma_3$ acting on $\{4,5,6\}$ generated by $(4\; 5 \; 6)$. Then conjugating $S_0$ by $\rho$ we get a Sylow $3$-subgroup $S_2$ of $\sigma_3$ acting on $\{7,8,9\}$ that is generated by $(9\;8\;7)$. Hence, a Sylow $3$-subgroup of $\sigma_9$ is given by \[\langle(1\;2\;3),(4\;5\;6),(9\;8\;7),(1\;4\;7)(2\;5\;8)(3\;6\;9)\rangle.\] Conjugation by $\rho$ on the generators sends $(1\;2\;3)\mapsto (9\;8\;7)$, $(4\;5\;6)\mapsto(4\;5\;6)^{-1}$, $(9\;8\;7)\mapsto (1\;2\;3)$ and $[(1\;4\;7)(2\;5\;8)(3\;6\;9)]\mapsto[(1\;4\;7)(2\;5\;8)(3\;6\;9)]^{-1}$. Hence, this Sylow $3$-subgroup of $W$ is stable under $\rho$.
	\end{example}

	\begin{proof}[Proof of Theorem \ref{thm:sylowstablediagramautroeal}]
		By previous observations in this section and Proposition \ref{prop:sylowinsymmetricisnormalized}, the remaining irreducible $\ell$-cuspidal cases to consider are $D_4$ when $\ell=2$, $I_2(m)$ when $\ell>2$, and $E_6$ when $\ell=3$. We will now give a Sylow $\ell$-subgroup stable under the diagram automorphisms of each of these cases.
		
		Let $W$ be of type $D_4$. Let $\rho_1$ be the order two Coxeter diagram automorphism given by:  \[\alpha_1\mapsto \alpha_1 \hspace{1em} \alpha_2\mapsto \alpha_2 \hspace{1em} \alpha_3\mapsto \alpha_4 \hspace{1em} \alpha_4\mapsto \alpha_3\] Let $\rho_2$ be the order three Coxeter diagram automorphism given by: \[\alpha_1\mapsto \alpha_3 \hspace{1em} \alpha_2\mapsto \alpha_2 \hspace{1em} \alpha_3\mapsto \alpha_4 \hspace{1em} \alpha_4\mapsto \alpha_1\]
		It is easy to check that $S_2=\langle s_1,s_3,s_2s_1s_3s_2,s_2s_1s_4s_2\rangle$ is a Sylow $2$-subgroup of $W$ stable under both $\rho_1$ and $\rho_2$.
		
		Now let $W$ be of type $I_2(m)$ and $\ell>2$. Then the unique Sylow $\ell$-subgroup $\langle (s_1s_2)^\frac{m}{\nu_\ell(m)}  \rangle$ is stable under the order two Coxeter diagram automorphism switching $\alpha_1$ and $\alpha_2$.
		
		Now let $W$ be of type $E_6$ with simple system $\Delta=\{\alpha_1,\dots,\alpha_6\}$ corresponding to the numbering in Figure \ref{fig:coxdiagrams}. Then the only non-trivial Coxeter diagram automorphism is of order two. This automorphism permutes the indices of the roots via $(1\; 6)(3\; 5)$. A Sylow $3$-subgroup of $W$ is given by $S_3=\langle s_1s_3,s_0s_2,s_6s_5,w_0'w_0\rangle$,
		where $w_0$ is the longest element of $E_6$ with respect to $\Delta$, $w_0'$ is the longest element of $E_6$ with respect to the simple system $\{\alpha_2,\alpha_3,\alpha_4,\alpha_5,\alpha_6,-\alpha_0\}$ and $\alpha_0$ is the highest root with respect to $\Delta$. The element $w_0'w_0$ has order three and maps the simple roots as follows:
		\[\alpha_1 \mapsto -\alpha_0 \hspace{1em} \alpha_2 \mapsto\alpha_5 \hspace{1em} \alpha_3 \mapsto \alpha_2 \hspace{1em} \alpha_4 \mapsto \alpha_4 \hspace{1em} \alpha_5 \mapsto \alpha_3 \hspace{1em} \alpha_6  \mapsto \alpha_1\]
		The order two Coxeter diagram automorphism acts on the generators of $S_3$ as follows:
		\[s_1s_3 \mapsto s_6s_5 \hspace{1em} s_0s_2\mapsto s_0s_2 \hspace{1em} s_6s_5\mapsto s_1s_3 \hspace{1em} w_0'w_0\mapsto (w_0'w_0)^{-1}.\] Hence, $S_3$ is stable under the order two Coxeter diagram automorphism.
		
		Since we have confirmed that all $\ell$-cuspidal irreducible finite real reflection groups have a Sylow $\ell$-subgroups stable under the diagram automorphisms except for type $F_4$ when $\ell=3$, the first part of Theorem \ref{thm:sylowstablediagramautroeal} follows. If $W$ is $\ell$-supercuspidal, then it is also $\ell$-cuspidal. The second part of Theorem \ref{thm:sylowstablediagramautroeal} follows from the observation that type $F_4$ is not $3$-supercuspidal; it has a reflection subgroup of type $A_2\times \tilde{A}_2$ that contains a Sylow $3$-subgroup.
	\end{proof}

	\section{normalizers of Sylow subgroups in finite real reflection groups}\label{normalizersofsylowsubgroupsinreal}
	
	In this section we prove the existence of a semidirect product decomposition of the normalizer of a Sylow subgroup in any finite real reflection group. This decomposition is found by combining results from \cite{HOWLETT} and Theorem \ref{thm:sylowstablediagramautroeal}.

	\begin{proof}[Proof of Theorem \ref{thm:normalizersylowreal}]
		By Corollary \ref{cor:normalizercontainment} and \cite{HOWLETT}, $N_W(S_\ell)\leq N_W(W(\Lambda))=W(\Lambda)\rtimes U_\Lambda$, where $U_\Lambda=\{w\in W \mid w\Lambda=\Lambda\}$. By similar reasoning to Remark \ref{rem:reductiontoirred} it is sufficient to show that for each irreducible finite real reflection group $W$ there exists a Sylow $\ell$-subgroup $S_\ell$ stable under the Coxeter diagram automorphisms of all irreducible components of $P_\ell\in \mathcal{P}_\ell(W)$, where $S_\ell\leq P_\ell\lneq W$ (if $P_\ell=W$ then the result is trivial). By Theorem \ref{thm:sylowstablediagramautroeal}, the only possible exception is if $P_\ell$ is of type $F_4$ when $\ell=3$ and is a proper parabolic subgroup of $W$. By inspection of Coxeter diagrams, a proper parabolic subgroup of an irreducible finite real reflection group can never be of type $F_4$, so the result follows.
	\end{proof}
	
	By the conjugacy of Sylow $\ell$-subgroups in $W$, any Sylow $\ell$-subgroup will have such a decomposition of its normalizer for an appropriate choice of simple system of $W$. This means that describing normalizers of Sylow $\ell$-subgroups of finite reflection groups can be reduced to the situation that $W$ is $\ell$-cuspidal. The minimality of a parabolic subgroup containing a Sylow $\ell$-subgroup is essential for both Corollary \ref{cor:normalizercontainment} and Theorem \ref{thm:normalizersylowreal} to hold, as shown by the following example.
	
	\begin{example}
		Let $W$ be of type $E_7$. Then the standard parabolic subgroup $W(\Lambda)$ of type $D_5$, where $\Lambda=\{\alpha_1,\alpha_2,\alpha_3,\alpha_4,\alpha_5\}$ with respect to the ordering in Figure \ref{fig:coxdiagrams}, contains Sylow $5$-subgroups of $W$. However, it is not minimal with respect to this property, since parabolic subgroups of type $A_4$ will contain a Sylow $5$-subgroup. If we let $S_5=\langle s_1s_3s_4s_2\rangle\in \text{Syl}_5(W(\Lambda))$, then $s_6\in N_W(S_5)$ but $s_6\notin N_W(W(\Lambda))$. Hence, there is no $S\in \text{Syl}_5(W(\Lambda))$ with $N_W(S)\leq N_W(W(\Lambda))$. MAGMA calculations also confirm that no Sylow $5$-subgroup in $W(\Lambda)$ is normalized by $U_\Lambda=\langle \rho, s_7\rangle$, where $\rho$ induces the order two Coxeter diagram automorphism of $W(\Lambda)$ switching $\alpha_2$ and $\alpha_5$.
	\end{example}
	
	We will now refine the decomposition Theorem \ref{thm:normalizersylowreal} by replacing elements of $\mathcal{P}_\ell(W)$ by elements of $\mathcal{R}_\ell(W)$. There is no analogue of Corollary \ref{cor:normalizercontainment} for elements of $\mathcal{R}_\ell(W)$. Hence, we will first need to identify when
	\begin{equation}\label{suitability}
		N_W(S_\ell)\leq N_W(W(\Lambda))
	\end{equation}
	holds, where $S_\ell\in\text{Syl}_\ell(W)$ is a subgroup of $W(\Lambda)\in \mathcal{R}_\ell(W)$. When (\ref{suitability}) holds we say that $W$ is \emph{$\ell$-suitable}, otherwise we say $W$ is \emph{$\ell$-unsuitable}.
	
	\begin{theorem}\label{thm:normalizersylowref}
		Let $W$ be an $\ell$-suitable finite real reflection group and $W(\Lambda)\in\mathcal{R}_\ell(W)$ with $\Lambda\subseteq\Phi$ a simple system for $W(\Lambda)$. Then there exists a Sylow $\ell$-subgroup $S_\ell\leq W(\Lambda)$ such that $N_W(S_\ell)=N_{W(\Lambda)}(S_\ell)\rtimes U_\Lambda$.
	\end{theorem}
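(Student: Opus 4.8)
The plan is to run the argument of Theorem~\ref{thm:normalizersylowreal} with a single substitution: since there is no analogue of Corollary~\ref{cor:normalizercontainment} for reflection subgroups, the containment $N_W(S_\ell)\leq N_W(W(\Lambda))$ is no longer automatic, and supplying it is exactly the content of the $\ell$-suitability hypothesis. Combined with the decomposition $N_W(W(\Lambda))=W(\Lambda)\rtimes U_\Lambda$ of \cite{HOWLETT} (valid for reflection subgroups, as noted in \cite[Remark 3.5]{GHM3}), everything then reduces to producing inside $W(\Lambda)$ a Sylow $\ell$-subgroup stable under the Coxeter diagram automorphisms of $W(\Lambda)$ taken with respect to the simple system $\Lambda$.

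First I would check that $W(\Lambda)$ is $\ell$-supercuspidal. As $W(\Lambda)\in\mathcal{R}_\ell(W)$ it contains a Sylow $\ell$-subgroup of $W$, so the $\ell$-part of $|W(\Lambda)|$ equals the $\ell$-part of $|W|$ and every Sylow $\ell$-subgroup of $W(\Lambda)$ is a Sylow $\ell$-subgroup of $W$. Every reflection subgroup of $W(\Lambda)$ is a reflection subgroup of $W$ contained in $W(\Lambda)$; were a proper one to contain a Sylow $\ell$-subgroup of $W(\Lambda)$, hence of $W$, this would contradict the minimality of $W(\Lambda)$ as an element of $\mathcal{R}_\ell(W)$. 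Thus $\mathcal{R}_\ell(W(\Lambda))=\{W(\Lambda)\}$, and the second part of Theorem~\ref{thm:sylowstablediagramautroeal} provides a Sylow $\ell$-subgroup $S_\ell\leq W(\Lambda)$ stable under every Coxeter diagram automorphism of $W(\Lambda)$ relative to $\Lambda$.

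Next I would verify that $U_\Lambda$ normalizes this $S_\ell$. For $w\in U_\Lambda=\{w\in W\mid w\Lambda=\Lambda\}$, conjugation by $w$ preserves $W(\Lambda)$ and sends $s_\alpha$ to $s_{w\alpha}$ for each $\alpha\in\Lambda$, with $w\alpha\in\Lambda$; since $w$ is an isometry it preserves the Coxeter matrix on $\Lambda$, so this conjugation induces a Coxeter diagram automorphism of $W(\Lambda)$ with respect to $\Lambda$, under which $S_\ell$ is invariant. Hence $U_\Lambda\leq N_W(S_\ell)$, while $N_{W(\Lambda)}(S_\ell)\leq N_W(S_\ell)$ is immediate. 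To finish, I would intersect $N_W(W(\Lambda))=W(\Lambda)\rtimes U_\Lambda$ with the subgroup $N_W(S_\ell)$: because $W(\Lambda)\trianglelefteq N_W(W(\Lambda))$, the subgroup $N_{W(\Lambda)}(S_\ell)=W(\Lambda)\cap N_W(S_\ell)$ is normal in $N_W(S_\ell)$ and meets $U_\Lambda$ trivially; and writing an arbitrary $w\in N_W(S_\ell)$ as $uv$ with $u\in W(\Lambda)$ and $v\in U_\Lambda$, the fact that $v\in U_\Lambda\leq N_W(S_\ell)$ forces $u=wv^{-1}\in W(\Lambda)\cap N_W(S_\ell)=N_{W(\Lambda)}(S_\ell)$, giving $N_W(S_\ell)=N_{W(\Lambda)}(S_\ell)\rtimes U_\Lambda$.

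I expect the only substantive step to be the $\ell$-supercuspidality of $W(\Lambda)$, which lets Theorem~\ref{thm:sylowstablediagramautroeal} be invoked cleanly — in particular the type $F_4$, $\ell=3$ exception cannot arise, since $F_4$ is not $3$-supercuspidal — after which the argument is the same bookkeeping as in Theorem~\ref{thm:normalizersylowreal}. The genuine difficulty, namely the failure of Corollary~\ref{cor:normalizercontainment} for reflection subgroups, has been hypothesized away in the definition of $\ell$-suitable, so the real work lies outside this proof, in deciding which finite real reflection groups are $\ell$-suitable.
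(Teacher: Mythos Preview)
Your proposal is correct and follows essentially the same approach as the paper's proof: use $\ell$-suitability to get $N_W(S_\ell)\leq N_W(W(\Lambda))=W(\Lambda)\rtimes U_\Lambda$, then apply Theorem~\ref{thm:sylowstablediagramautroeal} to $W(\Lambda)$ (which is $\ell$-supercuspidal) to produce a Sylow $\ell$-subgroup stable under all diagram automorphisms, hence normalized by $U_\Lambda$. The paper leaves the $\ell$-supercuspidality of $W(\Lambda)$ and the final semidirect product verification implicit, so your version simply fills in details the paper skips.
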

	
	\begin{proof}
		By assumption $W$ is $\ell$-suitable, so by \cite{HOWLETT}, $N_W(S_\ell)\leq N_W(W(\Lambda))=W(\Lambda)\rtimes U_\Lambda$, where $U_\Lambda=\{w\in W \mid w\Lambda=\Lambda\}$. It is sufficient to show that for each irreducible finite real reflection group $W$ there exists a Sylow $\ell$-subgroup $S_\ell$ stable under the Coxeter diagram automorphisms of all irreducible components of $R_\ell\in \mathcal{R}_\ell(W)$, where $S_\ell\leq R_\ell\lneq W$ (if $R_\ell=W$, then the result is trivial). By Theorem \ref{thm:sylowstablediagramautroeal}, such a Sylow $\ell$-subgroup always exists.
	\end{proof}
	
	By the conjugacy of Sylow $\ell$-subgroups of $W$, any Sylow $\ell$-subgroup will have such a decomposition for an appropriate choice of simple system of $W(\Lambda)$. Therefore, when $W$ is $\ell$-suitable, describing normalizers of Sylow $\ell$-subgroups of finite reflection groups can be reduced to the situation that $W$ is $\ell$-supercuspidal.
	
	To determine the usefulness of Theorem \ref{thm:normalizersylowref}, we must identify which finite real reflection groups are $\ell$-suitable. It is sufficient to determine $\ell$-suitability for irreducible $W$ that are not $\ell$-supercuspidal. Furthermore, in \cite[Observation 1.4]{METOO} it is noted that if $W$ is irreducible and not $\ell$-cuspidal, then $\mathcal{P}_\ell(W)=\mathcal{R}_{\ell}(W)$. Hence, any $W$ that is not $\ell$-cuspidal will automatically be $\ell$-suitable by Corollary \ref{cor:normalizercontainment}. Therefore, it is sufficient to check $\ell$-suitability for irreducible $W$ that is $\ell$-cuspidal and not $\ell$-supercuspidal. These $W$ can be deduced from \cite[Table 3]{ME}. We will determine the irreducible finite reflection groups that are $\ell$-suitable and $\ell$-unsuitable cases, summarising the results in Table \ref{table:nonsupercuspidalreal}.

	\begin{table}[H]
		\begin{center}
			\caption{$\ell$-cuspidal irreducible $W$ that are not $\ell$-supercuspidal}
			\scalebox{0.88}{\begin{tabular}{ | c | c | c | }
					\hline
					{Type of $W$} & $\ell$ & Type of $W(\Lambda)\in \mathcal{R}_\ell(W)$    \\ \hline \hline			
					$B_n$ & $2$ & $\prod_{i=0}^{k}B_{2^i}^{b_i}$   \\ \hline
					
					$I_2(m), m$ even & $2$ & $I_2(2^{\nu_2(m)})$  \\ \hline
					
					$I_2(m), m$ odd & $\ell>2$ & $I_2(\ell^{\nu_\ell(m)})$, $\ell$-unsuitable if $m\neq \ell^{\nu_\ell(m)}$   \\ \hline
					
					$I_2(m), m$ even 	& $\ell>2$ & $I_2(\ell^{\nu_\ell(m)})$ and $\tilde{I_2}(\ell^{\nu_\ell(m)})$, $\ell$-unsuitable if $m\neq 2\ell^{\nu_\ell(m)}$  \\ \hline
					
					$H_3$ & $2$ & $A_1^3$  \\ \hline
					
					$H_4$ & $2$ & $D_4$  	\\ \cline{2-3} 
					& $3$ & $A_2^2$	\\ \cline{2-3} 
					& $5$ & $I_2(5)^2$   \\ \hline
					
					$F_4$ & $2$  & $B_4$ and $C_4$  \\ \cline{2-3} 
					& $3$ & $A_2\times \tilde{A_2}$ \\ \hline
					
					$E_7$ & $2$ & $A_1\times D_6$ \\ \hline
					
					$E_8$ & $2$ & $D_8$  \\ \cline{2-3} 
					& $3$ & $A_2\times E_6$  \\ \cline{2-3}
					& $5$ &  $A_4^2$ \\ \hline
			\end{tabular}}
			\label{table:nonsupercuspidalreal}
		\end{center}
	\end{table}
	
	The non-dihedral exceptional cases of $W$ listed in Table \ref{table:nonsupercuspidalreal} are confirmed to satisfy property (\ref{suitability}) via straightforward MAGMA calculations.
	
	\begin{proposition}\label{prop:suitabilityofdihedrals}
		Let $W$ be of type $I_2(m)$ with $m\geq 5$. Then $W$ is $\ell$-suitable except when $\ell>2$ and
		\begin{itemize}
			\item[(i)] $m$ is odd and $m\neq \ell^{\nu_\ell(m)}$,
			\item[(ii)] $m$ is even and $m\neq2\ell^{\nu_\ell(m)}$.
		\end{itemize}
	\end{proposition}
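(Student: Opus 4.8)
The plan is to carry out everything explicitly inside the dihedral group $W = I_2(m)$ of order $2m$. I would fix the rotation subgroup $C = \langle r \rangle$ with $r = s_1 s_2$, of order $m$, normal in $W$ of index $2$; record that the $m$ reflections of $W$ are the elements $s_1 r^k$ ($0 \le k < m$); and note the two conjugation rules $r (s_1 r^k) r^{-1} = s_1 r^{k-2}$ and $(s_1 r^j)(s_1 r^k)(s_1 r^j)^{-1} = s_1 r^{2j-k}$. The first step is to pin down the relevant data. For $\ell$ odd, every $\ell$-element of $W$ lies in $C$, so the Sylow $\ell$-subgroup $S_\ell$ is the unique subgroup of $C$ of order $q := \ell^{\nu_\ell(m)}$; being characteristic in $C \triangleleft W$ it is normal in $W$, whence $N_W(S_\ell) = W$. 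Next I would observe that every reflection subgroup of $W$ is itself dihedral of the form $R_{a,h}$ --- the group with rotation part $\langle r^h \rangle$ (for some divisor $h \mid m$) and reflection set $\{s_1 r^k : k \equiv a \pmod h\}$, of type $I_2(m/h)$, with $R_{a,h} \supseteq R_{a',h'}$ iff $h \mid h'$ and $a \equiv a' \pmod h$. Since $S_\ell \le R_{a,h}$ exactly when $h \mid m/q$, the members of $\mathcal{R}_\ell(W)$ are precisely the $R_{a,m/q}$ of type $I_2(q)$: one conjugacy class if $m/q$ is odd and two (the $I_2(q)$ and $\tilde{I_2}(q)$ of Table~\ref{table:nonsupercuspidalreal}) if $m/q$ is even --- equivalently, since $q$ is odd, according to the parity of $m$.

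The heart of the argument is a direct computation of $N_W(R_{a,h})$. Because $\langle r^h \rangle$ is automatically normal in $W$, an element normalizes $R_{a,h}$ iff it stabilizes the reflection set; running $r^j$ and $s_1 r^j$ through the two conjugation rules gives: $r^j$ normalizes iff $h \mid 2j$, and $s_1 r^j$ normalizes iff $2j \equiv 2a \pmod h$. The payoff is a parity dichotomy: $N_W(R_{a,h}) = R_{a,h}$ when $h$ is odd, whereas $N_W(R_{a,h}) = R_{a,h/2}$ --- of type $I_2(2m/h)$, containing $R_{a,h}$ with index $2$ --- when $h$ is even. Specializing to $h = m/q$ (which, as $q$ is odd, has the parity of $m$): when $m$ is odd, $N_W(R_{a,m/q}) = R_{a,m/q}$, which is proper in $W$ unless $q = m$; when $m$ is even, $N_W(R_{a,m/q}) = R_{a,m/(2q)}$ of type $I_2(2q)$, which is proper in $W$ unless $m = 2q$.

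Now I can conclude. Since $N_W(S_\ell) = W$ and every member of $\mathcal{R}_\ell(W)$ is one of these $R_{a,m/q}$ --- all with the same normalizer type, so neither the choice of $a$ nor the choice of conjugacy class matters --- property~(\ref{suitability}) holds iff $N_W(R_{a,m/q}) = W$, i.e.\ iff $m = \ell^{\nu_\ell(m)}$ when $m$ is odd and $m = 2\ell^{\nu_\ell(m)}$ when $m$ is even. This is exactly the complement of cases (i) and (ii); in those equality cases $W$ is $\ell$-supercuspidal (or its Sylow is an index-$2$ reflection subgroup with normalizer $W$), so the decomposition holds. It remains to treat $\ell = 2$: if $m$ is odd then $W$ is not $2$-cuspidal, so $\mathcal{P}_2(W) = \mathcal{R}_2(W)$ by \cite[Observation 1.4]{METOO} and $2$-suitability follows from Corollary~\ref{cor:normalizercontainment}; if $m$ is even then the Sylow $2$-subgroup is itself a reflection subgroup and, since $m/2^{\nu_2(m)}$ is odd, it is (up to conjugacy) the unique minimal reflection subgroup containing a Sylow $2$-subgroup, so taking $W(\Lambda) = S_2$ makes~(\ref{suitability}) trivial. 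Hence $W$ is always $2$-suitable.

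I expect the only genuinely delicate step to be getting the normalizer computation right --- in particular the parity dichotomy for $N_W(R_{a,h})$ --- together with the bookkeeping that, when $m$ is even, the two conjugacy classes of type-$I_2(q)$ reflection subgroups in $\mathcal{R}_\ell(W)$ behave identically. Everything else is routine arithmetic in the dihedral group.
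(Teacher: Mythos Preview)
Your proposal is correct and follows essentially the same route as the paper: reduce to the observation that $N_W(S_\ell)=W$ for $\ell>2$, compute the normalizer of an element $R\in\mathcal{R}_\ell(W)$ of type $I_2(\ell^{\nu_\ell(m)})$ (getting $N_W(R)=R$ when $m$ is odd and $|N_W(R)|=2|R|$ when $m$ is even), and handle $\ell=2$ via \cite[Observation~1.4]{METOO} and the fact that the Sylow $2$-subgroup is itself a reflection subgroup. Your explicit parametrisation $R_{a,h}$ and the resulting parity dichotomy for $N_W(R_{a,h})$ simply make the normalizer calculation that the paper states directly fully explicit; one small slip is the phrase ``its Sylow is an index-$2$ reflection subgroup'' in the $m=2\ell^{\nu_\ell(m)}$ case---you mean that the element of $\mathcal{R}_\ell(W)$ (not $S_\ell$ itself, which is cyclic) has index $2$ in $W$ and hence normalizer $W$.
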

	
	\begin{proof}
		If $W$ is not $\ell$-cuspidal then property (\ref{suitability}) holds by \cite[Observation 1.4]{METOO}. Since $W$ is only $\ell$-cuspidal when $\ell=2$ and $m$ is odd, so it remains to check $\ell$-suitability when $\ell=2$ and $m$ is even, or when $\ell>2$.
		
		If $\ell=2$ and $m$ is even, the type of an element of $\mathcal{R}_\ell(W)$ is $I_2(2^{\nu_2(m)})$. This is a Sylow $2$-subgroup, so $W$ is $2$-suitable.
		
		If $\ell>2$, the type of an element $R\in\mathcal{R}_\ell(W)$ is $I_2(\ell^{\nu_\ell(m)})$, with unique conjugacy class if $m$ is odd and two conjugacy classes if $m$ is even. The Sylow $\ell$-subgroup $S_\ell$ of $W$ is unique, so $N_W(S_\ell)=W$. If $m$ is a power of $\ell$, then $W$ is $\ell$-supercuspidal and $\ell$-suitability follows, so assume $m$ is not a power of $\ell$. If $m$ is odd, then $N_W(R)=R\lneq W$, so $W$ is $\ell$-unsuitable. If $m$ is even, then $N_W(R)$ is generated by $R$ and the order two rotation $(s_1s_2)^{m/2}\in W$, so $|N_W(R)|=2|R|$. Hence, $W$ is $\ell$-unsuitable unless $m=2\ell^{\nu_\ell(m)}$.
	\end{proof}
	
	As seen in Table \ref{table:nonsupercuspidalreal}, $W$ of type $B_n$ with $\ell=2$ is the only irreducible classical case that is $\ell$-cuspidal but not $\ell$-supercuspidal. Let $\text{Perm}(n)\cong \sigma_n$ be the group of $n\times n$ permutation matrices and $A(2,1,n)$ be the group of diagonal matrices with non-zero entries in $\{\pm 1\}$. Then $W(B_n)\cong A(2,1,n)\rtimes\text{Perm}(n)$ (see \cite[Chap. 2]{GUSTAY}). This notation will be generalized in Section \ref{complexcase}. 
	
	\begin{proposition}\label{prop:suitabilityofbn}
		If $W$ is of type $B_n$, then it is $2$-suitable.	
	\end{proposition}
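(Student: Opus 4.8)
The plan is to work in the wreath-product model $W\cong A(2,1,n)\rtimes\text{Perm}(n)$ and to reduce $2$-suitability to an orbit statement for Sylow $2$-subgroups of the symmetric group. Write the base-$2$ expansion $n=(b_k\cdots b_0)_2$ and partition $\{1,\dots,n\}$ into blocks, $b_i$ of them of size $2^i$ for each $i$. Let $R\le W$ be the subgroup of all signed permutations fixing each block setwise; this is a reflection subgroup of type $\prod_i B_{2^i}^{b_i}$, so by Table~\ref{table:nonsupercuspidalreal} it lies in $\mathcal{R}_2(W)$. For the Sylow subgroup choose $P=\prod_X P_X\le\text{Perm}(n)$, where the product runs over the blocks $X$ and each $P_X$ is a Sylow $2$-subgroup of $\text{Perm}(X)\cong\sigma_{2^i}$ isomorphic to $\mathcal{C}_2^{(i)}$ as in Proposition~\ref{prop:sylowofsymmetric}, and set $S_2:=A(2,1,n)\rtimes P$. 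A comparison of orders gives $S_2\in\text{Syl}_2(W)$, and clearly $S_2\le R$; so it suffices to prove $N_W(S_2)\le N_W(R)$.

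First I would compute $N_W(S_2)=A(2,1,n)\rtimes N_{\text{Perm}(n)}(P)$. Since $A(2,1,n)\trianglelefteq W$ and $A(2,1,n)\le S_2$, writing a general element of $W$ as $\tau\pi$ with $\tau\in A(2,1,n)$ and $\pi\in\text{Perm}(n)$, a short calculation gives $\tau\pi\, S_2\,(\tau\pi)^{-1}=A(2,1,n)\rtimes\pi P\pi^{-1}$, which equals $S_2$ exactly when $\pi$ normalizes $P$, with no restriction on $\tau$. This is routine semidirect-product bookkeeping based on the normality of $A(2,1,n)$ in $W$.

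The crux is to identify $N_{\text{Perm}(n)}(P)$, and here the key observation is that the orbits of $P$ on $\{1,\dots,n\}$ are exactly the chosen blocks: the iterated wreath product $\mathcal{C}_2^{(i)}$ of $i$ copies of the cyclic group of order $2$ acts transitively on $2^i$ points (an easy induction on $i$), so each $P_X$ is transitive on its block $X$. Hence any $\pi\in N_{\text{Perm}(n)}(P)$ permutes the set of $P$-orbits while preserving their sizes, so it maps each block of size $2^i$ to a block of size $2^i$; that is, $N_{\text{Perm}(n)}(P)\le\prod_i\big(\sigma_{2^i}\wr\sigma_{b_i}\big)$, the stabilizer in $\text{Perm}(n)$ of this size-respecting partition into blocks. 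Finally I would match this last group with $N_W(R)\cap\text{Perm}(n)$: by Howlett's description \cite{HOWLETT}, the H-complement $U_\Lambda$ of $R$ is generated by the block permutations realizing the group $\prod_i\sigma_{b_i}$ of diagram automorphisms permuting components of equal type (no graph automorphism of a $B_2$-component occurs, since an orthogonal map cannot interchange the two root lengths of a $B_2$-system), so $N_W(R)\cap\text{Perm}(n)=\big(R\cap\text{Perm}(n)\big)\rtimes U_\Lambda=\prod_i\big(\sigma_{2^i}\wr\sigma_{b_i}\big)$. Combining, $N_W(S_2)=A(2,1,n)\cdot N_{\text{Perm}(n)}(P)\subseteq N_W(R)$, since both factors lie in $N_W(R)$, which is exactly property (\ref{suitability}).

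I do not expect a genuine obstacle: transitivity of iterated wreath products is standard, and the semidirect-product manipulations are mechanical. The points needing a little care are the identification of $N_W(R)\cap\text{Perm}(n)$ with $\prod_i(\sigma_{2^i}\wr\sigma_{b_i})$ through the explicit shape of the H-complement $U_\Lambda$ for type $B_n$ recorded in \cite{HOWLETT}, and the remark that the reflection subgroups of $W$ of type $\prod_i B_{2^i}^{b_i}$ form a single conjugacy class, so that the particular $R$ constructed above is a legitimate representative of $\mathcal{R}_2(W)$.
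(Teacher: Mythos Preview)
Your argument is correct, but the paper's proof is considerably shorter and takes a different route. Both proofs begin by identifying the Sylow $2$-subgroups of $W\cong A(2,1,n)\rtimes\text{Perm}(n)$ as $A(2,1,n)\rtimes S$ with $S\in\text{Syl}_2(\text{Perm}(n))$, and both compute $N_W(A(2,1,n)\rtimes S)=A(2,1,n)\rtimes N_{\text{Perm}(n)}(S)$ by the same semidirect-product bookkeeping. At this point the paper simply invokes the classical fact (cited from \cite[Corollary A.13.3]{Berkovich}) that Sylow $2$-subgroups of the symmetric group are self-normalizing, giving $N_W(S_2)=S_2$; property~(\ref{suitability}) then holds trivially for any $W(\Lambda)\in\mathcal{R}_2(W)$ containing $S_2$, with no need to describe $N_W(R)$ or to worry about conjugacy classes in $\mathcal{R}_2(W)$.

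Your approach avoids quoting the self-normalizing result and instead bounds $N_{\text{Perm}(n)}(P)$ from above by the partition stabilizer $\prod_i(\sigma_{2^i}\wr\sigma_{b_i})$ via the orbit argument, and then matches this group with $N_W(R)\cap\text{Perm}(n)$ using Howlett's explicit description of the H-complement. This is more laborious---it requires the identification of $U_\Lambda$, the root-length remark for $B_2$, and the single-conjugacy-class observation---but it has the minor virtue of being self-contained. In effect, your orbit argument is one of the standard steps toward proving that Sylow $2$-subgroups of $\sigma_n$ are self-normalizing; the paper just cites the endpoint of that argument.
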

	
	\begin{proof}
		Since $A(2,1,n)$ is a $2$-group, the Sylow $2$-subgroups of $A(2,1,n)\rtimes \text{Perm}(n)$ are given by $A(2,1,n)\rtimes S$, where $S\in \text{Syl}_2(\text{Perm}(n))$. By \cite[Corollary A.13.3]{Berkovich}, the Sylow $2$-subgroups in $\text{Perm}(n)$ are self-normalizing. Hence, $N_{A(2,1,n)\rtimes \text{Perm}(n)}(A(2,1,n)\rtimes S)=A(2,1,n)\rtimes S$, and we conclude that $W$ is $2$-suitable.
	\end{proof}
	
	\begin{corollary}\label{cor:suitability}
		Let $W$ be a finite real reflection group. Then $W$ is $\ell$-suitable except if $\ell>2$ and it has an irreducible component of type $I_2(m)$ with
		\begin{itemize}
			\item[(i)] $m$ odd and $m\neq \ell^{\nu_\ell(m)}$,
			\item[(ii)] $m$ even and $m\neq2\ell^{\nu_\ell(m)}$.
		\end{itemize}
	\end{corollary}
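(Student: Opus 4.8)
The plan is to reduce the claim to the irreducible case and then assemble the pieces already established. First I would observe that $\ell$-suitability of $W$, as defined by property (\ref{suitability}), is inherited in a product decomposition $W = W_1 \times \dots \times W_k$: a Sylow $\ell$-subgroup of $W$ is $S_\ell = S_1 \times \dots \times S_k$ with $S_i \in \mathrm{Syl}_\ell(W_i)$, an element $R_\ell \in \mathcal{R}_\ell(W)$ decomposes compatibly as a product of elements $R_i \in \mathcal{R}_\ell(W_i)$ (since reflection subgroups of a product split along the factors, and minimality over $\mathrm{Syl}_\ell$ is checked factorwise), and both $N_W(S_\ell)$ and $N_W(R_\ell)$ are the corresponding products of normalizers in the factors. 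Hence (\ref{suitability}) for $W$ holds if and only if it holds for each $W_i$, so it suffices to treat irreducible $W$.

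Next I would dispose of the cases where $W$ is irreducible but not $\ell$-cuspidal: by \cite[Observation 1.4]{METOO} we have $\mathcal{P}_\ell(W) = \mathcal{R}_\ell(W)$ in that situation, so any $W(\Lambda) \in \mathcal{R}_\ell(W)$ is in fact a parabolic subgroup minimally containing a Sylow $\ell$-subgroup, and Corollary \ref{cor:normalizercontainment} gives $N_W(S_\ell) \le N_W(W(\Lambda))$ directly. Likewise, if $W$ is $\ell$-supercuspidal then $\mathcal{R}_\ell(W) = \{W\}$ and (\ref{suitability}) is trivial. So the only irreducible cases left are those that are $\ell$-cuspidal but not $\ell$-supercuspidal, and these are exactly the rows of Table \ref{table:nonsupercuspidalreal}. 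For the dihedral rows, Proposition \ref{prop:suitabilityofdihedrals} settles $\ell$-suitability and pinpoints the exceptions in (i) and (ii); for type $B_n$ with $\ell = 2$, Proposition \ref{prop:suitabilityofbn} gives $2$-suitability; and the remaining entries (types $H_3, H_4, F_4, E_7, E_8$) are confirmed $\ell$-suitable by the MAGMA calculations already cited after Table \ref{table:nonsupercuspidalreal}.

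Combining these, the only way $W$ can fail to be $\ell$-suitable is if some irreducible component $W_i$ fails to be $\ell$-suitable, and by the case analysis above this forces $W_i$ to be of type $I_2(m)$ with $\ell > 2$ and $m$ in the excluded range of Proposition \ref{prop:suitabilityofdihedrals}, which is precisely conditions (i) and (ii) of the corollary. I do not anticipate a genuine obstacle here: the statement is an assembly of prior results, and the only point needing a little care is the product-decomposition step, where one must check that reflection subgroups of a direct product of irreducible reflection groups (with disjoint supports in $V = V_1 \oplus \dots \oplus V_k$) split as products and that minimality over $\mathrm{Syl}_\ell$ is a factorwise condition — both of which are routine, since $\mathrm{Syl}_\ell$ of a product is the product of the $\mathrm{Syl}_\ell$'s and containment is checked coordinatewise.
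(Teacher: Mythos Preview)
Your proposal is correct and follows essentially the same route as the paper: reduce to irreducible components, observe that non-$\ell$-cuspidal and $\ell$-supercuspidal cases are automatic (via \cite[Observation~1.4]{METOO} and Corollary~\ref{cor:normalizercontainment}), and then invoke Propositions~\ref{prop:suitabilityofdihedrals} and~\ref{prop:suitabilityofbn} together with the MAGMA checks for the remaining exceptional types in Table~\ref{table:nonsupercuspidalreal}. The paper states the reduction steps in the discussion preceding the table rather than in the proof itself, and is silent on why $\ell$-suitability passes to irreducible factors; your explicit treatment of the product decomposition is a welcome addition but does not constitute a different approach.
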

	
	\begin{proof}
		This follows from MAGMA calculations for the exceptional non-dihedral cases as well as Propositions \ref{prop:suitabilityofdihedrals} and \ref{prop:suitabilityofbn}.
	\end{proof}
	
	Corollary \ref{cor:suitability} shows that in most cases, the assumption of $\ell$-suitability in Theorem \ref{thm:normalizersylowref} is satisfied.
	
	\section{normalizers of Sylow subgroups in finite complex reflection groups}\label{complexcase}
	
	We now let $W$ be a finite complex reflection group acting on $V=\mathbb{C}^n$ for some $n\in \N$. The finite complex reflection groups were classified in \cite{TODD} and have Shephard-Todd numbering $G_1-G_{37}$ (see \cite[Chap. 8 \S 7]{GUSTAY}).
	
	\begin{example}\label{gmpn}
		Let $m,p,n$ be positive integers such that $p\mid m$. Then define \[A(m,p,n):=\{\text{diag}(\theta_1,\dots,\theta_n)\mid \theta_i\in \mathbb{C}, \theta_i^m=1 \ \text{and} \ (\prod_{i=1}^{n}\theta_i)^{m/p}=1 \}.\] Furthermore, let $\text{Perm}(n)$ be the $n\times n$ permutation matrices. Then $G(m,p,n):=A(m,p,n)\rtimes \text{Perm}(n)$ is a finite complex reflection group of order $\frac{m^n n!}{p}$ acting on $\mathbb{C}^n$. Although there is no canonical choice of generating sets like in finite real reflections there are standard minimal generating sets that are used throughout literature. Let $s_i=\mathbb{I}_{i-1}\oplus\text{adiag}(1,1)\oplus\mathbb{I}_{n-i-1}$ for $1\leq i \leq n-1$, $r=\mathbb{I}_{n-2}\oplus \text{adiag}(\zeta,\zeta^{-1})$ and $t=\mathbb{I}_{n-1}\oplus(\zeta)$ where $\zeta$ is a primitive $m$th root of unity. We list the standard generating sets often used for different cases of $G(m,p,n)$. \begin{align*}
			G(1,1,n)&=\langle s_1,s_2,\dots, s_{n-1} \rangle, \\
			G(m,1,n)&=\langle s_1,s_2,\dots,s_{n-1}, t\rangle, \\
			G(m,m,n)&=\langle s_1,s_2,\dots,s_{n-1},r\rangle, \\
			G(m,p,n)&=\langle s_1,s_2,\dots,s_{n-1}, r, t^p\rangle \hspace{1em} \text{for} \ p\neq 1,m, \\
			G(m,1,1)&=\langle t\rangle.
		\end{align*}
		For further details regarding these reflection groups see \cite[Chap. 2]{GUSTAY}
	\end{example}
	
	We will now introduce the preliminaries to prove Theorem \ref{thm:normalizersylowparacomp}. The $\ell$-Sylow classes of parabolic and reflection subgroups were classified in \cite{METOO}. The normalizer of a parabolic subgroup $P$ in a finite complex reflection group $W$ was described in \cite{MURA}. Similarly to \cite{HOWLETT}, it is proven that there exists a semidirect product decomposition $N_W(P)=P\rtimes U$, where the complement $U$ is often described as a stabilizer of a set of roots whose associated reflections generate $P$. We call the complement $U$ the \emph{MT-complement}. When $W$ is the complexification of a finite real reflection group, the MT-complement of a parabolic subgroup agrees with the H-complement. In some of the strictly complex cases an ad-hoc choice of roots must be made. Furthermore, in some cases there is no choice of roots for the generating reflections whose stabilizer is a complement (see \cite{MURA} for details). The MT-complement and other data can be calculated using the MAGMA code mentioned in \cite{MURA} and found at \url{https://www.maths.usyd.edu.au/u/don/software.html}. We note that the roots and generating reflections used by MAGMA for finite complex reflection groups correspond to the defining relations and diagrams given in \cite[App. 2, Tables 1-5]{Complexdiagrams} except in cases $G_{12}$ and $G_{22}$. Due to the ad-hoc nature of the roots stabilized by the MT-complement, relying on a generalization of Coxeter diagram automorphisms to prove Theorem \ref{thm:normalizersylowparacomp} appears to be unfeasible. Hence, we instead consider the irreducible $W$ that are not $\ell$-cuspidal, which are deduced from \cite[Table 1]{METOO}, and check if there is a Sylow $\ell$-subgroup normalized by the MT-complement.

	\begin{proof}[Proof of Theorem \ref{thm:normalizersylowparacomp}]
		By similar reasoning to Remark \ref{rem:reductiontoirred} it is sufficient to prove the theorem for each irreducible $W$. Consider the irreducible finite complex reflection groups $G_i$ for $i\in\{4,5,\dots,37\}$. We deduce the cases that are not $\ell$-cuspidal from \cite[Table 1]{METOO}. For each case, we use the MAGMA code from \cite{MURA} and add a function to find the Sylow $\ell$-subgroups of a $P_\ell\in\mathcal{P}_\ell(W)$ normalized by the MT-complement. In each case we find such a Sylow $\ell$-subgroup exists.
		
		Let us now consider the infinite family cases $G_1-G_3$. If $W=G_1= G(1,1,n)$, then $W$ is the complexification of the case of type $A_{n-1}$, so the MT-complement is the same as the H-complement, and have a Sylow $\ell$-subgroup normalized by this complement by Theorem \ref{thm:normalizersylowreal}. If $W=G_3= G(m,1,1)$ then $W$ is $\ell$-cuspidal and the result follows. Finally, if $W=G_2= G(m,p,n)$, $W$ is not $\ell$-cuspidal when $\ell\nmid m$. In particular, $P_\ell\in\mathcal{P}_\ell(W)$ is type $\prod_{i=0}^{k}G(1,1,\ell^i)^{b_i}$, where $n=(b_kb_{k-1}\dots b_1b_0)_\ell$. The MT-complement is described in \cite[Theorem 3.12 (iii)]{MURA} as a subgroup of $\prod_{i=0}^{k}G(m,1,b_i)$, where $\text{Perm}(b_i)$ permutes the components of type $G(1,1,\ell^i)$, while $A(m,1,b_i)$ multiplies the components of type $G(1,1,\ell^i)$ by an $m^{\text{th}}$ root of unity $\zeta_{j_i}$ for $1\leq j_i\leq b_i$ such that $\prod_{j_i=1}^{b_i}{\zeta_{j_i}}=1$. Now the matrix representatives of $A(m,1,b_i)$ commute with the Sylow $\ell$-subgroups of $G(1,1,\ell^i)^{b_i}$ and there exists a Sylow $\ell$-subgroup of $G(1,1,\ell^i)^{b_i}$ stable under $\text{Perm}(b_i)$ by the reasoning in Remark \ref{rem:reductiontoirred}. Hence, there is a Sylow $\ell$-subgroup of $P_\ell$ normalized by the MT-complement.
	\end{proof}
	
	\begin{remark}
		We do not investigate a generalization of Theorem \ref{thm:normalizersylowref} to finite complex reflection groups, since a splitting for normalizers of reflection subgroups does not always exist as noted in \cite[Example 6.1]{GHM3}.
	\end{remark}

	\section{Sylow subgroups stable under diagram automorphisms}\label{complexdiagramsproof}
	
	We will now investigate a generalization of Theorem \ref{thm:sylowstablediagramautroeal}. Finite complex reflection groups do not have standard notion of simple roots or simple reflections. However, there is a commonly used set of generating reflections, which can be found in \cite[App. 2, Tables 1-5]{Complexdiagrams}. For a fixed $W$, let $S$ be the set of these commonly used generating reflections. We define a \emph{diagram automorphism} of a complex reflection group $W$ as a permutation of $S$ that induces a group automorphism. These diagram automorphisms permute the diagrams given in \cite[App. 2, Tables 1-5]{Complexdiagrams}, generalizing the notion of Coxeter diagram automorphism. Proposition \ref{prop:diagramautomorphismscomp} is a classification of the non-trivial diagram automorphisms $\rho$ for irreducible $W$, where $\rho$ is written as a permutation of the generating reflections with respect to the ordering of the diagrams in Figure~\ref{fig:complexdiagram}.
	
	\begin{remark}
		A \emph{reflection coset} of $W$ is given by $\gamma W$, where $\gamma$ is a finite order element of $N_{\text{GL}(V)}(W)$. The reflection cosets of complex reflection groups have been studied in relation to twisted invariant theory \cite{twistedinvarianttheory,reflectionsubquotients}. A classification of the reflection cosets is given in \cite[3E]{SPETS1} or alternatively \cite[Table D.5]{GUSTAY}. In \cite{autocompref}, structural results regarding the automorphism group of a complex reflection group are proven. They define the diagram automorphisms to be the outer automorphisms induced from $N_{\text{GL}(V)}(W)$, since these automorphisms correspond to the diagram automorphisms of finite Coxeter groups. These diagram automorphisms differ to those we have defined above. In particular, our diagram automorphisms include elements that do not belong to $\text{GL}(V)$ and do not include any of the induced outer automorphisms that do not permute the generating set (compare \cite[3E]{SPETS1} and Proposition \ref{prop:diagramautomorphismscomp}). Alternative diagrams to those found in \cite[App. 2, Tables 1-5]{Complexdiagrams} have also been studied. For example, in \cite[2.2]{eer} an alternative diagram is given for $G(e,e,n)$ such that the  reflection cosets do indeed induce a permutation of the diagram.
	\end{remark}
	
	\begin{proposition}\label{prop:diagramautomorphismscomp}
		Let $W$ be an irreducible finite complex reflection group. Then the non-trivial diagram automorphisms of $W$ with respect to the labelling in Figure \ref{fig:complexdiagram} are:
		\begin{itemize}
			\item[(i)] $G_1=G(1,1,n)$ with $\rho=(1\;n-1)(2\;n-2)\dots (\lceil\frac{n-1}{2}\rceil\;\lceil\frac{n}{2}\rceil)$.
			\item[(ii)]  $G(m,p,n)$ and $G(m,m,n)$, with $\rho=(n-1\; n)$.
			\item[(iii)]  $G(4,2,2), G_{12}$ and $G_{22}$ with $\rho\in\langle (1\;2),(1\;2\;3)\rangle$.
			\item[(iv)] $G(2,2,4)$ with $\rho\in \langle(1\; 3),(1\;3\;4)\rangle$.
			\item[(v)] $G_k$ for $k=4,5,8,16,20,24$ with $\rho=(1\; 2)$.
			\item[(vi)] $G_7$ with $\rho=(2\;3)$.
			\item[(vii)] $G_{25}$ with  $\rho=(1\;3)$.
			\item[(viii)] $G_{k}$ for $k=28,32$ with $\rho=(1\;4)(2\;3)$.
			\item[(ix)] $G_{k}$ for $k=31,33$ with $\rho=(1\;5)(2\;4)$.
			\item[(x)] $G_{35}$ with $\rho=(1\;6)(3\;5)$.
		\end{itemize}
	\end{proposition}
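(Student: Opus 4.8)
The plan is to prove Proposition~\ref{prop:diagramautomorphismscomp} by a direct case-by-case inspection over the Shephard--Todd list $G_1,\dots,G_{37}$, using in each case the presentation and diagram recorded in \cite[App.~2, Tables~1--5]{Complexdiagrams} (with the modifications for $G_{12}$ and $G_{22}$ noted earlier in this section). Since, by our definition, a diagram automorphism is a permutation $\rho$ of the fixed generating set $S$ of $W$ that induces a group automorphism, and since such a $\rho$ induces an automorphism precisely when it carries the defining relations of $W$ to consequences of the defining relations, the task for each $W$ splits into two parts: \textbf{(1)} list the graph symmetries of the diagram, i.e.\ the permutations of $S$ preserving all edge labels and any extra relations displayed in the presentation; and \textbf{(2)} confirm that each of these genuinely descends to an automorphism of $W$ (rather than only to an automorphism of the associated braid-type group), while also ruling out permutations of $S$ that are not visible as diagram symmetries. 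Part \textbf{(2)} is ultimately anchored by the MAGMA computations referenced in the introduction: for each $W$ one builds $W$ from its presentation, runs over the permutations of $S$, and tests which extend to group automorphisms.

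First I would dispose of the Coxeter (real) types, which occur on the list as $G(1,1,n)=W(A_{n-1})$, $G(2,2,n)=W(D_n)$, $G(m,m,2)=W(I_2(m))$, $G_{23}=W(H_3)$, $G_{28}=W(F_4)$, $G_{30}=W(H_4)$, $G_{35}=W(E_6)$, $G_{36}=W(E_7)$, and $G_{37}=W(E_8)$. For these, the diagram automorphisms coincide with the automorphisms of the Coxeter diagram, already classified in the list (i)--(vi) of Section~\ref{Preliminaries}; this yields item (i) (the flip of $A_{n-1}$, here $G(1,1,n)$), the $n\geq 5$ part of item (ii) for $G(2,2,n)=W(D_n)$, the $D_4$ triality in item (iv) written in the $G(2,2,4)$ labelling, item (viii) for $G_{28}=W(F_4)$, and item (x) for $G_{35}=W(E_6)$, while $G_{23}$, $G_{30}$, $G_{36}$, $G_{37}$ contribute nothing (their Coxeter diagrams have no nontrivial symmetry).

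Next I would treat the remaining infinite-family cases $G(m,p,n)$ and $G(m,m,n)$ by examining their diagrams directly: each consists of the type-$A$ chain on $s_1,\dots,s_{n-1}$ together with the extra generator(s) $r$ (and $t^p$) attached near one end, so for generic parameters the only nontrivial symmetry is the transposition of the two ``fork'' nodes, giving $\rho=(n-1\;n)$ as in item (ii); the exceptional coincidences $G(4,2,2)$ (item (iii)) and $G(2,2,4)$ (item (iv)) then arise from the extra symmetry of those small diagrams. For the exceptional groups $G_4,\dots,G_{34}$ one reads the diagram symmetries off case by case: the rank-two groups with a single edge give a transposition exactly when the two node orders agree (items (v), (vi) for $G_4,G_5,G_7,G_8,G_{16},G_{20}$), the rank-two groups $G_{12}$ and $G_{22}$ have presentations in three involutions symmetric under $S_3$ (item (iii), together with $G(4,2,2)$), $G_{25}$ has the reversal symmetry of its three-node path (item (vii)), $G_{28}$ and $G_{32}$ have the order-two symmetry of their four-node diagrams (item (viii)), $G_{31}$ and $G_{33}$ the order-two symmetry of their five-node diagrams (item (ix)), and the remaining exceptional groups ($G_6,G_9,\dots$, $G_{24},G_{26},G_{27},G_{29},G_{30},G_{34}$) have asymmetric diagrams and contribute nothing.

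The main obstacle I anticipate is part \textbf{(2)} in the cases whose presentations are \emph{not} of Coxeter/braid type --- notably $G_{12}$, $G_{22}$, and $G(4,2,2)$ --- where the defining relations are longer, less symmetric-looking words, so that invariance of the full relation set under a candidate symmetry is not visually evident and must be verified by explicit word manipulation or by computer; conversely, one must take care not to miss an automorphism that exists in $W$ but is obscured by the particular choice of defining relations. For this reason the classification is finished not by inspection of the diagrams alone but by a complete MAGMA check that, for each irreducible $W$, determines exactly which permutations of $S$ induce group automorphisms, matching the list (i)--(x).
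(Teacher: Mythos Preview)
Your overall strategy---inspect the diagram symmetries and then verify by MAGMA that they induce group automorphisms---is exactly the paper's approach, and for the exceptional groups $G_4,\dots,G_{37}$ it is adequate (modulo a slip: you put $G_{24}$ in the ``asymmetric'' bin, but it does have $\rho=(1\;2)$ as in item (v)).

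The genuine gap is in your treatment of the infinite family $G(m,p,n)$ with $1<p<m$. You identify the obvious candidate $\rho=(n-1\;n)$ swapping the two fork nodes, and assert this is the diagram automorphism ``for generic parameters''. But the point of the proposition (and the reason Figure~\ref{fig:complexdiagram} is drawn only for $G(2m,m,n)$) is that this permutation is \emph{not} an automorphism for most parameters: it extends to an automorphism of $W$ if and only if $m=2p$. This is precisely the content of the Example following the proposition in the paper, where one checks relations (1)--(6) are preserved by $\rho$ but relation (7),
\[
\underbrace{s_{n}s_{n+1}s_{n-1}s_{n}s_{n-1}s_{n}\cdots}_{p+1}=\underbrace{s_{n+1}s_{n-1}s_{n}s_{n-1}s_{n}\cdots}_{p+1},
\]
after applying $\rho$ and multiplying out the matrix representatives, holds iff $\zeta^{2p}=1$, i.e.\ iff $m=2p$. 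Your plan defers all of Part~(2) to MAGMA, but a computer check cannot run over infinitely many $(m,p,n)$; the infinite-family case must be done by hand via the matrix model, and this is where the actual content of the proof lives. Without this computation your argument does not determine which $G(m,p,n)$ appear in item (ii).
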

	
	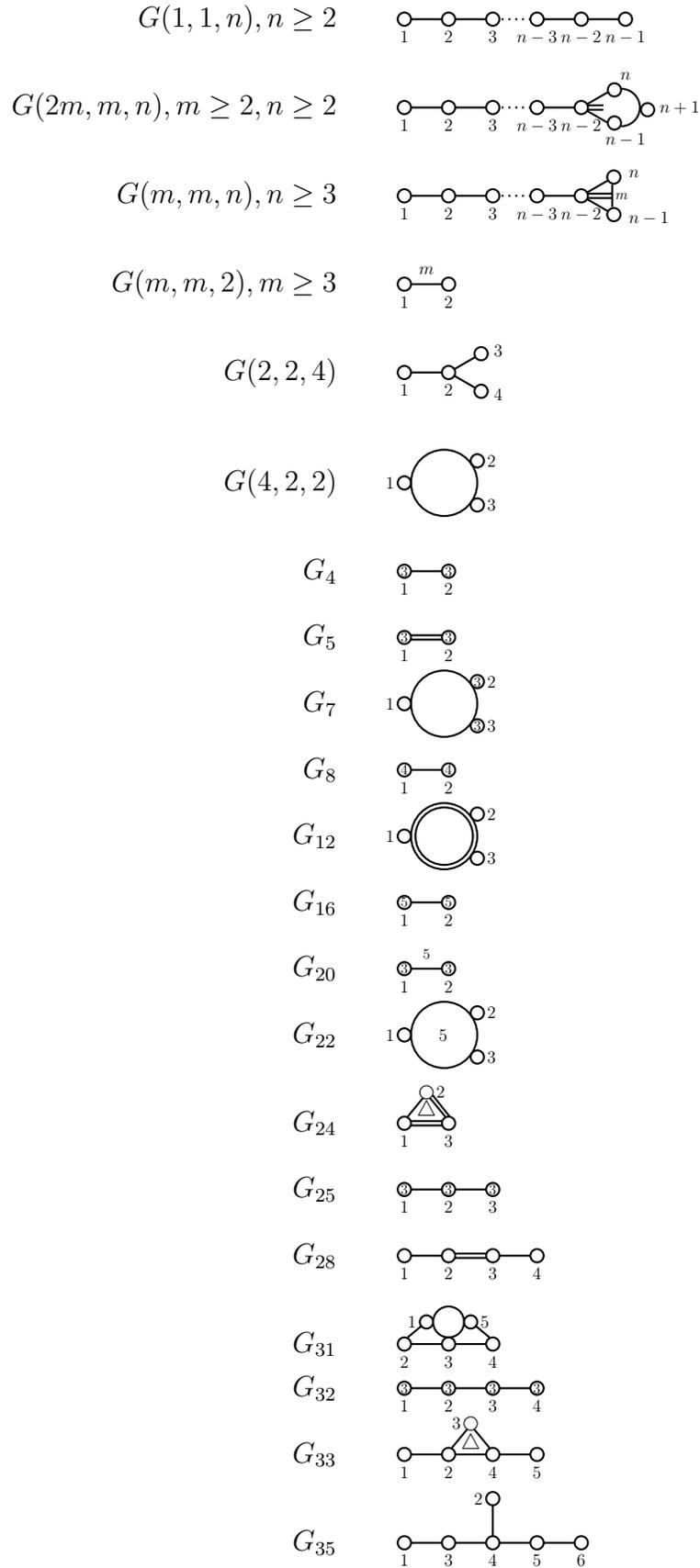
\begin{figure}[]
		\caption{Complex reflection groups with diagram automorphisms}
		\begin{center}
			\begin{tikzpicture}[scale=.32]
				
				\draw (-2.6,0) node[anchor=east]  {$G(1,1,n), n\geq 2$};
				\foreach \x in {0,...,5}
				\draw[xshift=\x cm,thick] (\x cm,0) circle (.3cm);
				\draw[dotted,thick] (4.4 cm,0) -- +(1.4 cm,0);
				\foreach \y in {0.15,1.15,3.15,4.15}
				\draw[xshift=\y cm,thick] (\y cm,0) -- +(1.4 cm,0);
				\draw (0,-0.1) node[below] {\scalebox{0.6}{1}};
				\draw (2,-0.1) node[below] {\scalebox{0.6}{2}};
				\draw (4,-0.1) node[below] {\scalebox{0.6}{3}};
				\draw (6,-0.1) node[below] {\scalebox{0.6}{$n-3$}};
				\draw (8,-0.1) node[below] {\scalebox{0.6}{$n-2$}};
				\draw (10,-0.1) node[below] {\scalebox{0.6}{$n-1$}};
				
				\draw (-2.6,-4) node[anchor=east]  {$G(2m,m,n), m\geq 2, n\geq 2$};
				\foreach \x in {0,...,4}
				\draw[xshift=\x cm,thick] (\x cm,-4) circle (.3cm);
				\draw[dotted,thick] (4.4 cm,-4) -- +(1.4 cm,0);
				\foreach \y in {0.15,1.15,3.15}
				\draw[xshift=\y cm,thick] (\y cm,-4) -- +(1.4 cm,0);
				\draw[xshift=8 cm,yshift=-4 cm,thick] (30: 3 mm) -- (30: 14 mm);
				\draw[xshift=8 cm,yshift=-4 cm,thick] (-30: 3 mm) -- (-30: 14 mm);	
				\draw (0,-4.1) node[below] {\scalebox{0.6}{1}};
				\draw (2,-4.1) node[below] {\scalebox{0.6}{2}};
				\draw (4,-4.1) node[below] {\scalebox{0.6}{3}};
				\draw (6,-4.1) node[below] {\scalebox{0.6}{$n-3$}};
				\draw (8,-4.1) node[below] {\scalebox{0.6}{$n-2$}};
				\draw (10,-4.7) node[below] {\scalebox{0.6}{$n-1$}};
				\draw (10,-3.2) node[above] {\scalebox{0.6}{$n$}};
				\draw[thick] (8.3,-4.1) --+ (0.7,0);
				\draw[thick] (8.3,-3.9) --+ (0.7,0);
				\draw[thick] ( 9.5 cm,-3.2) circle (.3cm);
				\draw[thick] ( 9.5 cm,-4.7) circle (.3cm);
				\draw[thick] ( 11 cm,-4.1) circle (.3cm);
				\draw[thick] (9.8,-3.15) arc (90:-90:0.85);
				\draw (11.1,-4) node[right] {\scalebox{0.6}{$n+1$}};
				
				\draw (-2.6,-8) node[anchor=east]  {$G(m,m,n), n\geq3$};
				\foreach \x in {0,1,...,4}
				\draw[xshift=\x cm,thick] (\x cm,-8) circle (.3cm);
				\draw[xshift=8 cm,yshift=-8 cm,thick] (30: 17 mm) circle (.3cm);
				\draw[xshift=8 cm,yshift=-8 cm,thick] (-30: 17 mm) circle (.3cm);
				\draw[dotted,thick] (4.4 cm,-8) -- +(1.4 cm,0);
				\foreach \y in {0.15,1.15,3.15}
				\draw[xshift=\y cm,thick] (\y cm,-8) -- +(1.4 cm,0);
				\draw[xshift=8 cm,yshift=-8 cm,thick] (30: 3 mm) -- (30: 14 mm);
				\draw[xshift=8 cm,yshift=-8 cm,thick] (-30: 3 mm) -- (-30: 14 mm);	
				\draw (0,-8.1) node[below] {\scalebox{0.6}{1}};
				\draw (2,-8.1) node[below] {\scalebox{0.6}{2}};
				\draw (4,-8.1) node[below] {\scalebox{0.6}{3}};
				\draw (6,-8.1) node[below] {\scalebox{0.6}{$n-3$}};
				\draw (8,-8.1) node[below] {\scalebox{0.6}{$n-2$}};
				\draw (9.7,-7) node[right] {\scalebox{0.6}{$n$}};
				\draw (9.7,-9) node[right] {\scalebox{0.6}{$n-1$}};
				\draw[thick] (8.3,-8.1) --+ (1.1,0);
				\draw[thick] (8.3,-7.9) --+ (1.1,0);
				\draw[thick] (9.4,-7.5) --+ (0,-1);
				\draw (9.1,-8) node[right] {\scalebox{0.5}{$m$}};
				
				\draw (-2.6,-12) node[anchor=east]  {$G(m,m,2), m\geq 3$};
				\foreach \x in {0,1}
				\draw[xshift=\x cm,thick] (\x cm,-12) circle (.3cm);
				\draw[thick] (0.3,-12) -- +(1.41 cm,0);
				\draw (0,-12.1) node[below] {\scalebox{0.6}{1}};
				\draw (2,-12.1) node[below] {\scalebox{0.6}{2}};
				\draw (1,-12) node[above] {\scalebox{0.6}{$m$}};
				
				\draw (-2.6,-16) node[anchor=east]  {$G(2,2,4)$};
				\foreach \x in {0,1}
				\draw[xshift=\x cm,thick] (\x cm,-16) circle (.3cm);
				\draw[xshift=2 cm,yshift=-16 cm,thick] (30: 17 mm) circle (.3cm);
				\draw[xshift=2 cm,yshift=-16 cm,thick] (-30: 17 mm) circle (.3cm);
				\draw[thick] (0.3 cm,-16) -- +(1.4 cm,0);
				\draw[xshift=2 cm,yshift=-16 cm,thick] (30: 3 mm) -- (30: 14 mm);
				\draw[xshift=2 cm,yshift=-16 cm,thick] (-30: 3 mm) -- (-30: 14 mm);	
				\draw (0,-16.1) node[below] {\scalebox{0.6}{1}};
				\draw (2,-16.1) node[below] {\scalebox{0.6}{2}};
				\draw (3.6,-15) node[right] {\scalebox{0.6}{3}};
				\draw (3.6,-17) node[right] {\scalebox{0.6}{4}};

				\draw (-2.6,-21) node[anchor=east]  {$G(4,2,2)$};
				\draw[thick] (0 cm,-21) circle (.3cm);
				\draw[thick] (1.8 cm,-21) circle (1.5cm);
				\draw[thick] (3.3 cm,-22) circle (.3cm);
				\draw[thick] (3.3 cm,-20) circle (.3cm);
				\draw (3.3,-22) node[right] {\scalebox{0.6}{3}};
				\draw (3.3,-20) node[right] {\scalebox{0.6}{2}};
				\draw (0,-21) node[left] {\scalebox{0.6}{1}};
				
				\draw (-2.6,-25) node[anchor=east]  {$G_4$};
				\foreach \x in {0,1}
				\draw[xshift=\x cm,thick] (\x cm,-25) circle (.3cm);
				\draw[thick] (0.3,-25) -- +(1.41 cm,0);
				\draw (0,-25.1) node[below] {\scalebox{0.6}{1}};
				\draw (2,-25.1) node[below] {\scalebox{0.6}{2}};
				\draw (0,-25) node {\scalebox{0.5}{3}};
				\draw (2,-25) node {\scalebox{0.5}{3}};
				
				\draw (-2.6,-28) node[anchor=east]  {$G_5$};
				\foreach \x in {0,1}
				\draw[xshift=\x cm,thick] (\x cm,-28) circle (.3cm);
				\draw[thick] (0.3 cm,-28.1) -- +(1.41 cm,0);
				\draw[thick] (0.3 cm,-27.9) -- +(1.41 cm,0);
				\draw (0,-28.1) node[below] {\scalebox{0.6}{1}};
				\draw (2,-28.1) node[below] {\scalebox{0.6}{2}};
				\draw (0,-28) node {\scalebox{0.5}{3}};
				\draw (2,-28) node {\scalebox{0.5}{3}};
				
				\draw (-2.6,-31) node[anchor=east]  {$G_{7}$};
				\draw[thick] (0 cm,-31) circle (.3cm);
				\draw[thick] (1.8 cm,-31) circle (1.5cm);
				\draw[thick] (3.3 cm,-32) circle (.3cm);
				\draw[thick] (3.3 cm,-30) circle (.3cm);
				\draw (3.3,-30) node {\scalebox{0.5}{3}};
				\draw (3.3,-32) node {\scalebox{0.5}{3}};
				\draw (3.3,-32) node[right] {\scalebox{0.6}{3}};
				\draw (3.3,-30) node[right] {\scalebox{0.6}{2}};
				\draw (0,-31) node[left] {\scalebox{0.6}{1}};

				\draw (-2.6,-34) node[anchor=east]  {$G_8$};
				\foreach \x in {0,1}
				\draw[xshift=\x cm,thick] (\x cm,-34) circle (.3cm);
				\draw[thick] (0.3,-34) -- +(1.41 cm,0);
				\draw (0,-34.1) node[below] {\scalebox{0.6}{1}};
				\draw (2,-34.1) node[below] {\scalebox{0.6}{2}};
				\draw (0,-34) node {\scalebox{0.5}{4}};
				\draw (2,-34) node {\scalebox{0.5}{4}};
				
						\draw (-2.6,-37) node[anchor=east]  {$G_{12}$};
				\draw[thick] (0 cm,-37) circle (.3cm);
				\draw[thick] (1.8 cm,-37) circle (1.5cm);
					\draw[thick] (1.8 cm,-37) circle (1.3cm);
				\draw[thick] (3.3 cm,-38) circle (.3cm);
				\draw[thick] (3.3 cm,-36) circle (.3cm);
				\draw (3.3,-38) node[right] {\scalebox{0.6}{3}};
				\draw (3.3,-36) node[right] {\scalebox{0.6}{2}};
				\draw (0,-37) node[left] {\scalebox{0.6}{1}};

				\draw (-2.6,-40) node[anchor=east]  {$G_{16}$};
				\foreach \x in {0,1}
				\draw[xshift=\x cm,thick] (\x cm,-40) circle (.3cm);
				\draw[thick] (0.3,-40) -- +(1.41 cm,0);
				\draw (0,-40.1) node[below] {\scalebox{0.6}{1}};
				\draw (2,-40.1) node[below] {\scalebox{0.6}{2}};
				\draw (0,-40) node {\scalebox{0.5}{5}};
				\draw (2,-40) node {\scalebox{0.5}{5}};

				\draw (-2.6,-43) node[anchor=east]  {$G_{20}$};
				\foreach \x in {0,1}
				\draw[xshift=\x cm,thick] (\x cm,-43) circle (.3cm);
				\draw[thick] (0.3,-43) -- +(1.41 cm,0);
				\draw (0,-43.1) node[below] {\scalebox{0.6}{1}};
				\draw (2,-43.1) node[below] {\scalebox{0.6}{2}};
				\draw (0,-43) node {\scalebox{0.5}{3}};
				\draw (2,-43) node {\scalebox{0.5}{3}};
				\draw (1,-43) node[above] {\scalebox{0.5}{$5$}};

						\draw (-2.6,-46) node[anchor=east]  {$G_{22}$};
				\draw[thick] (0 cm,-46) circle (.3cm);
				\draw[thick] (1.8 cm,-46) circle (1.5cm);
				\draw[thick] (3.3 cm,-47) circle (.3cm);
				\draw[thick] (3.3 cm,-45) circle (.3cm);
				\draw (3.3,-47) node[right] {\scalebox{0.6}{3}};
				\draw (3.3,-45) node[right] {\scalebox{0.6}{2}};
				\draw (0,-46) node[left] {\scalebox{0.6}{1}};
					\draw (1.75,-46) node {\scalebox{0.6}{5}};
				
				\draw (-2.6,-50) node[anchor=east]  {$G_{24}$};
				\foreach \x in {0,1}
				\draw[xshift=\x cm,thick] (\x cm,-50) circle (.3cm);
				\draw[thick] (0.3 cm,-50.1) --  +(1.41 cm,0);
				\draw[thick] (0.3 cm,-49.9) --  +(1.41 cm,0);
				\draw (0,-50.1) node[below] {\scalebox{0.6}{1}};
				\draw (2,-50.1) node[below] {\scalebox{0.6}{3}};
				\draw (1,-48.6) circle (.3cm);
				\draw[thick] (0.15 cm,-49.7) --  +(0.7cm,0.85);
				\draw[thick] (1.85 cm,-49.7) --  +(-0.7cm,0.85);
					\draw[thick] (2.05 cm,-49.7) --  +(-0.77cm,0.97);
					\draw (1,-50.3) node[above] {\scalebox{0.75}{$\triangle$}};
				\draw (1,-48.6) node[right] {\scalebox{0.6}{2}};
				
				\draw (-2.6,-53) node[anchor=east]  {$G_{25}$};
				\foreach \x in {0,1,2}
				\draw[xshift=\x cm,thick] (\x cm,-53) circle (.3cm);
				\draw[thick] (0.3,-53) -- +(1.41 cm,0);
				\draw[thick] (2.3,-53) -- +(1.41 cm,0);
				\draw (0,-53.1) node[below] {\scalebox{0.6}{1}};
				\draw (2,-53.1) node[below] {\scalebox{0.6}{2}};
				\draw (4,-53.1) node[below] {\scalebox{0.6}{3}};
				\draw (0,-53) node {\scalebox{0.5}{3}};
				\draw (2,-53) node {\scalebox{0.5}{3}};
				\draw (4,-53) node  {\scalebox{0.5}{3}};
				
				\draw (-2.6,-56) node[anchor=east]  {$G_{28}$};
				\foreach \x in {0,...,3}
				\draw[xshift=\x cm,thick] (\x cm,-56) circle (.3cm);
				\foreach \y in {0.15,2.15}
				\draw[xshift=\y cm,thick] (\y cm,-56) -- +(1.4 cm,0);
				\draw[thick] (2.3 cm,-56.1) --  +(1.41 cm,0);
				\draw[thick] (2.3 cm,-55.9) --  +(1.41 cm,0);
				\draw (0,-56.1) node[below] {\scalebox{0.6}{1}};
				\draw (2,-56.1) node[below] {\scalebox{0.6}{2}};
				\draw (4,-56.1) node[below] {\scalebox{0.6}{3}};
				\draw (6,-56.1) node[below] {\scalebox{0.6}{4}};
				
				\draw (-2.6,-60) node[anchor=east]  {$G_{31}$};
				\draw[thick] ( 0cm,-60) circle (.3cm);
				\draw[thick] (2 cm,-60) circle (.3cm);
				\draw[thick] (4 cm,-60) circle (.3cm);
				\draw[thick] (1 cm,-59) circle (.3cm);
				\draw[thick] (3 cm,-59) circle (.3cm);
				\draw[thick] ( 2 cm,-59) circle (0.7cm);
				\foreach \y in {0.15,1.15}
				\draw[xshift=\y cm,thick] (\y cm,-60) -- +(1.4 cm,0);
				\draw[thick] (0.15 cm,-59.7) --  +(0.6cm,0.5);
				\draw[thick] (3.85 cm,-59.7) --  +(-0.6cm,0.5);
				\draw (0,-60.1) node[below] {\scalebox{0.6}{2}};
				\draw (2,-60.1) node[below] {\scalebox{0.6}{3}};
				\draw (4,-60.1) node[below] {\scalebox{0.6}{4}};
				\draw (1,-59) node[left] {\scalebox{0.6}{1}};
				\draw (3,-59) node[right] {\scalebox{0.6}{5}};

				\draw (-2.6,-62) node[anchor=east]  {$G_{32}$};
				\foreach \x in {0,...,3}
				\draw[xshift=\x cm,thick] (\x cm,-62) circle (.3cm);
				\foreach \y in {1.15,0.15,2.15}
				\draw[xshift=\y cm,thick] (\y cm,-62) -- +(1.4 cm,0);
				\draw (0,-62.1) node[below] {\scalebox{0.6}{1}};
				\draw (2,-62.1) node[below] {\scalebox{0.6}{2}};
				\draw (4,-62.1) node[below] {\scalebox{0.6}{3}};
				\draw (6,-62.1) node[below] {\scalebox{0.6}{4}};
				\draw (0,-62) node {\scalebox{0.5}{3}};
				\draw (2,-62) node {\scalebox{0.5}{3}};
				\draw (4,-62) node {\scalebox{0.5}{3}};
				\draw (6,-62) node {\scalebox{0.5}{3}};

				\draw (-2.6,-65) node[anchor=east]  {$G_{33}$};
				\foreach \x in {0,...,3}
				\draw[xshift=\x cm,thick] (\x cm,-65) circle (.3cm);
				\foreach \y in {1.15,0.15,2.15}
				\draw[xshift=\y cm,thick] (\y cm,-65) -- +(1.4 cm,0);
				\draw (0,-65.1) node[below] {\scalebox{0.6}{1}};
				\draw (2,-65.1) node[below] {\scalebox{0.6}{2}};
				\draw (4,-65.1) node[below] {\scalebox{0.6}{4}};
				\draw (6,-65.1) node[below] {\scalebox{0.6}{5}};
				\draw[thick] (2.15 cm,-64.7) --  +(0.7cm,0.85);
				\draw[thick] (3.85 cm,-64.7) --  +(-0.7cm,0.85);
				\draw (3 cm,-63.6) circle (.3cm);
				\draw (3,-63.6) node[left] {\scalebox{0.6}{3}};
				\draw (3,-65.4) node[above] {\scalebox{0.75}{$\triangle$}};
				
				\draw (-2.6,-69) node[anchor=east]  {$G_{35}$};
				\foreach \x in {0,...,4}
				\draw[thick,xshift=\x cm] (\x cm,-69) circle (3 mm);
				\foreach \y in {0,...,3}
				\draw[thick,xshift=\y cm] (\y cm,-69) ++(.3 cm, 0) -- +(14 mm,0);
				\draw[thick] (4 cm,-67 cm) circle (3 mm);
				\draw[thick] (4 cm, -68.7 cm) -- +(0, 1.4 cm);
				\draw (2,-69.1) node[below] {\scalebox{0.6}{3}};
				\draw (4,-69.1) node[below] {\scalebox{0.6}{4}};
				\draw (6,-69.1) node[below] {\scalebox{0.6}{5}};
				\draw (8,-69.1) node[below] {\scalebox{0.6}{6}};
				\draw (0,-69.1) node[below] {\scalebox{0.6}{1}};
				\draw (4,-67) node[left] {\scalebox{0.6}{2}};
			\end{tikzpicture}
		\end{center}
		\label{fig:complexdiagram}
	\end{figure}
	
	This classification follows from  inspection of the diagrams and relations in \cite[App.2, Tables 1-5]{Complexdiagrams}. We provide MAGMA calculations to confirm the diagram automorphisms are indeed automorphisms in the exceptional cases. We give an example for the cases arising from $G(m,p,n)$ with $m,n\geq 2$, $p>2$ and $m\neq p$, as this requires some work.

	\begin{example}
		Let $W=G(m,p,n)$ with $m,n\geq 2$, $p>2$ and $m\neq p$. The relations on the set of generators $s_1,s_2,\dots ,s_{n+1}$ are \begin{align*}
			(1)& \hspace{5mm} s_1^2=s_2^2=\dots=s_n^2=s_{n+1}^{m/p} =1, \\
			(2)& \hspace{5mm} s_{i}s_j=s_js_i \ \text{for} \ 1\leq i,j\leq n-2 \ \text{and} \ |i-j|\geq2, \\
			(3)& \hspace{5mm} s_{i}s_{i+1}s_{i}=s_{i+1}s_is_{i+1} \ \text{for} \ 1\leq i\leq n-3, \\
			(4)& \hspace{5mm} s_{n+1}s_{n-2} =s_{n-2}s_{n+1}, \\
			(5)& \hspace{5mm} s_{n-1}s_{n-2}s_{n-1}=s_{n-2}s_{n-1}s_{n-2}, \\
			(6)& \hspace{5mm} s_{n}s_{n-2}s_{n}=s_{n-2}s_{n}s_{n-2}, \\
			(7)& \hspace{5mm} \underbrace{s_{n}s_{n+1}s_{n-1}s_{n}s_{n-1}s_{n}\dots}_{p+1 \ \text{factors}}=\underbrace{s_{n+1}s_{n-1}s_{n}s_{n-1}s_{n}\dots}_{p+1 \ \text{factors}}.
		\end{align*}
		
		It is clear from relations (1)-(6) that the only candidate for a non-trivial diagram automorphism is fixing $s_i$ for $1\leq i \leq n-2$ and $i=n+1$, as well as switching $s_{n-1}$ and $s_n$. For example, relations (1)-(4) are fixed and relations (5) and (6) are switched by this permutation on the generators. Call this permutation $\rho$ and consider its action on relation (7). We will use the matrix representatives to show that this relation under $\rho$ is only satisfied when $m=2p$. Choose the standard matrix generators for $G(m,p,n)$ seen in Example~\ref{gmpn}, where $s_n=r$ and $s_{n+1}=t^p$. After applying $\rho$ to relation (7) and multiplying the matrix representatives, we find that that $\rho$ is a diagram automorphism if and only if $\zeta^{2p}=1$. Hence, the only non-trivial diagram automorphism of $G(m,p,n)$ is $\rho$ when $m=2p$.
	\end{example}

	\begin{theorem}\label{thm:sylowstablediagramautocomp}
		Let $W$ be an $\ell$-cuspidal finite complex reflection group. There exists a Sylow $\ell$-subgroup of $W$ stable under the diagram automorphisms of $W$, except when $W$ has an irreducible component of type:
		\begin{itemize}
			\item[(a)] $G(2m,m,n)$ when $\ell>2$ and $\ell\mid m,n$.
			\item[(b)] $G(m,m,n)$ when $\ell>2$, $\ell\mid m,n$ and $m$ is not a power of $\ell$.
			\item[(c)] $G_{12}$ when $\ell=2$.
			\item[(d)] $G_{22}$ when $\ell=2,5$.
			\item[(e)] $G_{28}=F_4$ when $\ell=3$.
			\item[(f)]$G_{31}=O_4$ when $\ell=3,5$.
		\end{itemize}
		In particular, if $W$ is $\ell$-supercuspidal, then there exists a Sylow $\ell$-subgroup of $W$ stable under the diagram automorphisms of $W$, except when $W$ has an irreducible component of type $G_{12}$ when $\ell=2$.
	\end{theorem}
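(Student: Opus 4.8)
The plan is to mirror the proof of Theorem~\ref{thm:sylowstablediagramautroeal}: run through the classification of irreducible $\ell$-cuspidal finite complex reflection groups and, for those carrying non-trivial diagram automorphisms, either exhibit a Sylow $\ell$-subgroup stable under all of them or certify that none exists. First I would reduce to the irreducible case exactly as in Remark~\ref{rem:reductiontoirred}: choosing a stable Sylow $\ell$-subgroup inside one copy of each irreducible component and transporting it to the remaining copies of the same type using the diagram automorphisms that permute isomorphic components, the statement for general $W$ follows once it is known componentwise. So it suffices to treat irreducible $W$: for each $\ell$-cuspidal $W$ not on the exceptional list, produce a Sylow $\ell$-subgroup fixed by every diagram automorphism of $W$, and for the listed $W$ check that no Sylow $\ell$-subgroup is fixed by the full diagram automorphism group.

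Next I would intersect the irreducible $\ell$-cuspidal groups from \cite[Table~1]{METOO} with the non-trivial diagram automorphisms in Proposition~\ref{prop:diagramautomorphismscomp}; outside this intersection the diagram automorphism group is trivial and there is nothing to prove. Several surviving cases are inherited from the real setting: $G_1=G(1,1,n)$ is of type $A_{n-1}$ and is handled by Proposition~\ref{prop:sylowinsymmetricisnormalized} with Lemma~\ref{lem:groupautosylow}; $G(m,m,2)=I_2(m)$ has a unique, hence stable, Sylow $\ell$-subgroup when $\ell$ is odd, and is covered by Lemma~\ref{lem:groupautosylow} when $\ell=2$; and $G(2,2,4)=D_4$, $G_{28}=F_4$, $G_{35}=E_6$ carry the same diagrams and diagram automorphism groups as in the real case, so Theorem~\ref{thm:sylowstablediagramautroeal} applies and produces the exception $G_{28}$ at $\ell=3$. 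The remaining relevant exceptional-type groups, namely $G_k$ for $k\in\{4,5,7,8,12,16,20,22,24,25,31,32,33\}$ and the small group $G(4,2,2)$, have bounded order; for each I would use MAGMA, as in the real exceptional cases and in the proof of Theorem~\ref{thm:normalizersylowparacomp}, either to exhibit an explicit Sylow $\ell$-subgroup fixed by the generators of the diagram automorphism group recorded in Proposition~\ref{prop:diagramautomorphismscomp}, or to confirm that none exists; the latter occurs precisely for $G_{12}$ at $\ell=2$, $G_{22}$ at $\ell=2,5$, and $G_{31}$ at $\ell=3,5$ (while, for instance, $G(4,2,2)$ is a $2$-group, so its Sylow $2$-subgroup is the whole group and is automatically stable).

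The substantive work is the two infinite families $G(m,m,n)$ and $G(2m,m,n)$ equipped with the order-two diagram automorphism $\rho=(n-1\;n)$ swapping the two fork nodes of the diagram. When $\ell=2$, Lemma~\ref{lem:groupautosylow} immediately produces a $\rho$-stable Sylow $2$-subgroup, so assume $\ell>2$. Writing the group as $A\rtimes\text{Perm}(n)$ with $A$ the relevant group of diagonal matrices, $\rho$ descends to the identity on the quotient $\text{Perm}(n)\cong\sigma_n$, and its behaviour is pinned down by how it acts on $A$ together with a $1$-cocycle measuring how it moves the complement $\text{Perm}(n)$, both of which can be read off from the matrix generators. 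I would then build a candidate Sylow $\ell$-subgroup from the $\ell$-part of $A$ and a Sylow $\ell$-subgroup $P$ of $\text{Perm}(n)$ --- the latter chosen, when possible, to avoid the coordinates moved by the distinguished fork generator, using the symmetric-group argument underlying Remark~\ref{rem:reductiontoirred} --- and determine for which parameters this, or some $\text{Perm}(n)$-conjugate of it, can be made $\rho$-stable. The delicate point is that when $\ell\mid n$ the Sylow of $\text{Perm}(n)$ is forced to act on those coordinates, and the interaction of this with the product-one condition defining $A$ and with the cocycle obstructs $\rho$-stability for exactly the parameters in (a) and (b). I expect this bookkeeping around the diagonal part to be the main obstacle.

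Finally, the ``in particular'' assertion follows by checking which excepted groups are $\ell$-supercuspidal. Using the classification of $\ell$-Sylow classes of reflection subgroups in \cite{METOO}, each of $G(2m,m,n)$ at $\ell\mid m,n$, $G(m,m,n)$ at $\ell\mid m,n$ with $m$ not a power of $\ell$, $G_{22}$ at $\ell=2,5$, $G_{28}$ at $\ell=3$, and $G_{31}$ at $\ell=3,5$ contains a proper reflection subgroup carrying a Sylow $\ell$-subgroup of $W$, hence is not $\ell$-supercuspidal, whereas $G_{12}$ at $\ell=2$ is $\ell$-supercuspidal; this leaves $G_{12}$ at $\ell=2$ as the sole exception in the $\ell$-supercuspidal statement.
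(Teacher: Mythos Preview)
Your proposal is correct and follows essentially the same route as the paper: reduce to irreducibles via Remark~\ref{rem:reductiontoirred}, dispose of the real-type and small exceptional cases by Theorem~\ref{thm:sylowstablediagramautroeal}, Lemma~\ref{lem:groupautosylow}, and MAGMA, and isolate the infinite families $G(m,m,n)$ and $G(2m,m,n)$ with the fork-swapping automorphism as the genuine content. The only tactical difference is that the paper first observes that $G(m,m,n)$ sits inside $G(2m,m,n)$ with the same $\ell$-part and compatible diagram automorphism, so it suffices to treat $G(m,m,n)$; it then gives the concrete version of your ``cocycle'' argument by showing that for any Sylow element $S=AP$ moving the last coordinate one has $\rho(S)S^{-1}\in A(m,m,n)$ of order divisible by $m$, which is exactly the obstruction you anticipate.
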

	
	\begin{proof}
		Of the cases (i)-(x) in Proposition \ref{prop:diagramautomorphismscomp}, the $\ell$-cuspidal cases are deduced using \cite[Table 1]{METOO}. The $\ell$-cuspidal exceptional cases are recorded in Table \ref{table:sylowcomplexnormal}, with MAGMA being used to deduce the number of Sylow $\ell$-subgroups stable under $\rho$. In Table \ref{table:sylowcomplexnormal}, $\text{Syl}_\ell^\rho(W)$ is the set of Sylow $\ell$-subgroups stable under $\rho$.
		
		\begin{table}[h!]
			\begin{center}
				\caption{Sylow $\ell$-subgroups stable under $\rho$ for cuspidal exceptional $W$}
				\scalebox{1}{\begin{tabular}{ | c | c | c | c| c| c|}
						\hline
						{$W$} & $\rho$ & $\ell$ & $\ell$-supercuspidal &  $|\text{Syl}_\ell((W)|$ & $|\text{Syl}_\ell^\rho(W)|$   \\ \hline \hline
						$G_4$ & $(1 \; 2)$ & $2$ & \checkmark & $1$ & $1$ \\ \hline	
						
						$G_5$ & ($1\;2)$ & $2$ & \xmark & $1$ & $1$ \\ \hline	
						
						$G_7$ & $(2\;3)$ & $2$ & \xmark & $1$ & $1$ \\ \cline{3-6}
						& & $3$ & \xmark & $4$ & $2$ \\ \hline	
						
						$G_8$ & $(1\;2)$ & $2$ & \xmark & $3$ & $1$ \\ \cline{3-6}
						& & $3$ & \checkmark & $4$ & $2$ \\ \hline	
						
							$G_{12}$ &  $\langle (1\;2),(1\;2\;3)\rangle$ & $2$ & \checkmark & $3$ & $0$ \\ \cline{3-6}
						& & $3$ & \xmark & $4$ & $1$ \\ \hline	
						
						$G_{16}$ & $(1\;2)$ & $2$ & \checkmark & $5$ & $1$ \\ \cline{3-6}
						& & $3$ & \checkmark & $10$ & $2$ \\ \cline{3-6}
						& & $5$ & \xmark & $6$ & $2$ \\ \hline
						
						$G_{20}$ & $(1\;2)$ & $2$ & \xmark & $5$ & $1$ \\ \cline{3-6}
						& & $3$ & \xmark & $10$ & $2$ \\ \cline{3-6}
						& & $5$ & \checkmark & $6$ & $2$ \\ \hline
						
							$G_{22}$ & $\langle (1\;2),(1\;2\;3)\rangle$ & $2$ & \xmark & $5$ & $0$ \\ \cline{3-6}
						& & $3$ & \xmark & $10$ & $1$ \\ \cline{3-6}
						& & $5$ & \xmark & $6$ & $0$ \\ \hline
						
						$G_{24}=J_3^{(4)}$ & $(1\;2)$ & $7$ & \checkmark &  $8$ & $2$ \\ \hline
						
						$G_{25}=L_3$ & $(1\;3)$ & $3$ & \checkmark & $4$ & $4$ \\ \hline

						$G_{28}=F_4$  & $(1\;4)(2\;3)$ & $2$ & \xmark & $9$ & $3$ \\ \cline{3-6}
						& & $3$ & \xmark & $16$ & $0$ \\ \hline
						
						$G_{31}=O_4$  & $(1\;5)(2\;4)$ & $2$ & \xmark & $45$ & $9$ \\ \cline{3-6}
						& & $3$ & \xmark & $160$ & $0$ \\ \cline{3-6}
						& & $5$ & \xmark &  $576$ & $0$ \\ \hline
						
						$G_{32}=L_4$ & $(1\;4)(2\;3)$ & $2$ & \checkmark & $135$ & $11$ \\ \cline{3-6}
						& & $5$ & \checkmark & $1296$ & $24$ \\ \hline
						
						$G_{33}=K_5$ & $(1\;5)(2\;4)$ & $2$ &  \xmark & $135$ & $9$  \\ \hline
						
						$G_{35}=E_6$ & $(1\;6)(3\;5)$ & $3$ & \checkmark & $160$ &  $32$ \\ \hline
				\end{tabular}}
				\label{table:sylowcomplexnormal}
			\end{center}
		\end{table}
		
		The cases $G(1,1,n)$ and $G(2,2,4)$ reduce to the real reflection group setting. The case $G(4,2,2)$ is trivial since it is a $2$-group. The remaining $\ell$-cuspidal cases are $G(xm,m,n)$ where $x\in\{1,2\}$ and $\ell\mid m$. If $\ell=2$, then by Lemma \ref{lem:groupautosylow} there is a Sylow $2$-subgroup stabilized by $\rho$. Now assume $\ell>2$ and consider $G(xm,m,n)=A(xm,m,n)\rtimes \text{Perm}(n)$. We note that $G(2m,m,n)$ has a reflection subgroup of type $G(m,m,n)$, the order two diagram automorphism of $G(2m,m,n)$ induces the order two diagram automorphism of $G(m,m,n)$ and $\nu_\ell(|G(2m,m,n)|)=\nu_\ell(|G(m,m,n)|)$. Hence, if we prove part (b), then (a) follows. Let $\rho$ be the diagram automorphism of $G(m,m,n)=\langle s_1,\dots,s_{n-1},r\rangle$ switching $s_{n-1}$ and $r$, while fixing $s_1,\dots,s_{n-2}$, with generators from Example \ref{gmpn}.
		
		If $\ell\nmid n$, then $A(\ell^{\nu_\ell(m)},\ell^{\nu_\ell(m)},n)\rtimes S_\ell$ with where $S_\ell\in \text{Syl}(\langle s_1,\dots,s_{n-2}\rangle)$ is a Sylow $\ell$-subgroup of $G(m,m,n)$ and is fixed by $\rho$.
		
		Now let $\ell\mid n$. Firstly, if $m$ is a power of $\ell$, then $A(m,m,n)\rtimes S_\ell$ with $S_\ell\in \text{Syl}_\ell(\text{Perm}(n))$ is a Sylow $\ell$-subgroup of $G(m,m,n)$ stable under $\rho$. Now assume $m$ is not a power of $\ell$. Suppose there is a Sylow $\ell$-subgroup $S_\ell$ of $G(m,m,n)$ stable under $\rho$. Since $G(m,m,n)$ is $\ell$-cuspidal, there is an element of $S\in S_\ell$, with order some power of $\ell$ that does not fix $\mathbb{C}e_n$ setwise. The element $S$ can be written as $AP$ where $A\in A(m,m,n)$ and $P\in\text{Perm}(n)$. Note that $P$ has non-zero terms in the matrix positions $(n,i)$ and $(j,n)$ for some $1\leq i\neq j<n$, because if $i=j$ the order of $P$ is even, contradicting that $S=AP$ has order a power of $\ell$. Now $\rho(P)P^{-1}$ is a diagonal matrix consisting of nonzero entries $\zeta, \zeta^{-1}$ and $1$. Hence, $\rho(P)P^{-1}$ has order $m$. Furthermore, it is clear that $\rho(A(m,m,n))=A(m,m,n)$ since a word in $s_1,\dots,s_{n-1},r$ that gives a diagonal matrix will still be diagonal after switching $s_{n-1}$ and $r$ in the word. Hence, $\rho(S)S^{-1}=\rho(AP)P^{-1}A^{-1}=\rho(A)\rho(P)P^{-1}A^{-1}\in S_\ell$ where $\rho(A), \rho(P)P^{-1}, A^{-1}\in A(m,m,n)$. Since these elements are all diagonal, they commute, so $|\rho(S)S^{-1}|=\text{lcm}(|\rho(A)|, |\rho(P)P^{-1}|,|A^{-1}|)$, which is a multiple of $m$,  so is not a power of $\ell$. This contradicts $\rho(S)S^{-1}\in S_\ell$, so no Sylow $\ell$-subgroup of $G(m,m,n)$ is stable under $\rho$ when $m$ is not a power of $\ell$.
		
		This completes the proof of when $W$ is $\ell$-cuspidal. Since none of the exceptions listed are $\ell$-supercuspidal except $G_{12}$ when $\ell=2$ (see \cite[Table 3]{METOO}), the result follows.
	\end{proof}
	
	\begin{remark}
		We observe a connection between the exceptions found in (e) and (f) of Theorem~\ref{thm:sylowstablediagramautocomp}. If $G_{31}=\langle s_1,\dots, s_5 \rangle$ with respect to the ordering in Figure \ref{fig:complexdiagram}, then \[R:=\langle s_1,s_2,s_4,s_5\mid (s_1s_5)^4=(s_1s_2)^3=(s_1s_4)^2=(s_2s_4)^2=(s_2s_5)^3=1\rangle\] is isomorphic to the reflection group of type $F_4$. Furthermore, the order $2$ diagram automorphism of $G_{31}$ given by $\rho=(1\;5)(2\;4)$ induces the order $2$ diagram automorphism of $R$. We also note that $R$ contains a Sylow $3$-subgroup of $G_{31}$, so the exception $G_{28}$ when $\ell=3$ in Theorem \ref{thm:sylowstablediagramautocomp} follows from the exception $G_{31}$ when $\ell=3$.
	\end{remark}
	
	\section*{Acknowledgements}
	The author would like to acknowledge Anthony Henderson for helpful discussions and suggestions. The author would would also like to express gratitude towards the anonymous referee for their useful comments and corrections. The research was supported by an Australian Government Research Training Program Scholarship.

	\bibliographystyle{alpha}
	\bibliography{references.bib}

\begin{thebibliography}{AHJR17}

\bibitem[AHJR17]{AHJR}
Pramod~N. Achar, Anthony Henderson, Daniel Juteau, and Simon Riche.
\newblock Modular generalized {S}pringer correspondence {III}: exceptional
  groups.
\newblock {\em Math. Ann.}, 369(1-2):247--300, 2017.

\bibitem[BCP97]{MAGMA}
Wieb Bosma, John Cannon, and Catherine Playoust.
\newblock The {M}agma algebra system. {I}. {T}he user language.
\newblock {\em J. Symbolic Comput.}, 24(3-4):235--265, 1997.
\newblock Computational algebra and number theory (London, 1993).

\bibitem[Ber08]{Berkovich}
Yakov Berkovich.
\newblock {\em Groups of prime power order. {V}ol. 1}, volume~46 of {\em De
  Gruyter Expositions in Mathematics}.
\newblock Walter de Gruyter GmbH \& Co. KG, Berlin, 2008.

\bibitem[BLM06]{twistedinvarianttheory}
C.~Bonnaf\'e, G.~I. Lehrer, and J.~Michel.
\newblock Twisted invariant theory for reflection groups.
\newblock {\em Nagoya Math. J.}, 182:135--170, 2006.

\bibitem[BMM99]{SPETS1}
M.~Brou\'e, G.~Malle, and J.~Michel.
\newblock Towards spetses. {I}.
\newblock {\em Transform. Groups}, 4(2-3):157--218, 1999.

\bibitem[BMR98]{Complexdiagrams}
Michel Brou\'e, Gunter Malle, and Rapha\"el Rouquier.
\newblock Complex reflection groups, braid groups, {H}ecke algebras.
\newblock {\em J. Reine Angew. Math.}, 500:127--190, 1998.

\bibitem[CP11]{eer}
Ruth Corran and Matthieu Picantin.
\newblock A new {G}arside structure for the braid groups of type {$(e,e,r)$}.
\newblock {\em J. Lond. Math. Soc. (2)}, 84(3):689--711, 2011.

\bibitem[GHM94]{GHM}
Meinolf Geck, Gerhard Hiss, and Gunter Malle.
\newblock Cuspidal unipotent {B}rauer characters.
\newblock {\em J. Algebra}, 168(1):182--220, 1994.

\bibitem[GHM21]{GHM3}
Thomas Gobet, Anthony Henderson, and Ivan Marin.
\newblock Braid groups of normalizers of reflection subgroups.
\newblock {\em Ann. Inst. Fourier (Grenoble)}, 71(6):2273--2304, 2021.

\bibitem[Hal59]{HALL}
Marshall Hall, Jr.
\newblock {\em The theory of groups}.
\newblock The Macmillan Company, New York, 1959.

\bibitem[How80]{HOWLETT}
Robert~B. Howlett.
\newblock Normalizers of parabolic subgroups of reflection groups.
\newblock {\em J. London Math. Soc. (2)}, 21(1):62--80, 1980.

\bibitem[Hum90]{JEH}
James~E. Humphreys.
\newblock {\em Reflection groups and {C}oxeter groups}, volume~29 of {\em
  Cambridge Studies in Advanced Mathematics}.
\newblock Cambridge University Press, Cambridge, 1990.

\bibitem[LS99]{reflectionsubquotients}
G.~I. Lehrer and T.~A. Springer.
\newblock Reflection subquotients of unitary reflection groups.
\newblock {\em Canad. J. Math.}, 51(6):1175--1193, 1999.

\bibitem[LT09]{GUSTAY}
Gustav~I. Lehrer and Donald~E. Taylor.
\newblock {\em Unitary reflection groups}, volume~20 of {\em Australian
  Mathematical Society Lecture Series}.
\newblock Cambridge University Press, Cambridge, 2009.

\bibitem[MM10]{autocompref}
I.~Marin and J.~Michel.
\newblock Automorphisms of complex reflection groups.
\newblock {\em Represent. Theory}, 14:747--788, 2010.

\bibitem[MT18]{MURA}
Krishnasamy Muraleedaran and D.~E. Taylor.
\newblock Normalisers of parabolic subgroups in finite unitary reflection
  groups.
\newblock {\em J. Algebra}, 504:479--505, 2018.

\bibitem[ST54]{TODD}
G.~C. Shephard and J.~A. Todd.
\newblock Finite unitary reflection groups.
\newblock {\em Canad. J. Math.}, 6:274--304, 1954.

\bibitem[Ste64]{STEINBERG}
Robert Steinberg.
\newblock Differential equations invariant under finite reflection groups.
\newblock {\em Trans. Amer. Math. Soc.}, 112:392--400, 1964.

\bibitem[Tow18]{ME}
Kane~Douglas Townsend.
\newblock Classification of reflection subgroups minimally containing
  {$p$}-{S}ylow subgroups.
\newblock {\em Bull. Aust. Math. Soc.}, 97(1):57--68, 2018.

\bibitem[Tow20]{METOO}
Kane~Douglas Townsend.
\newblock Classification of {S}ylow classes of parabolic and reflection
  subgroups in unitary reflection groups.
\newblock {\em Comm. Algebra}, 48(9):3989--4001, 2020.

\end{thebibliography}

\end{document}